\newif\ifdraft
\ifdraft\usepackage[notref,notcite]{showkeys}\fi
\def \Tm {\mathbb T}
\newcommand{\eps}{\varepsilon}
\allowdisplaybreaks \numberwithin{equation}{section}
\newtheorem{theorem}{Theorem}[section]
\newtheorem{conjecture}[theorem]{Conjecture}
\newtheorem{lemma}[theorem]{Lemma}
\newtheorem{proposition}[theorem]{Proposition}
\newtheorem*{theorem*}{Theorem}
\newtheorem*{lemma*}{Lemma}
\newtheorem*{proposition*}{Proposition}
\newtheorem*{corollary*}{Corollary}
\theoremstyle{definition}
\newtheorem{definition}[theorem]{Definition}
\theoremstyle{remark}
\newtheorem{remark}[theorem]{Remark}
\newtheorem*{remark*}{Remark}
\newcommand{\Chi}[1]{\Chi*{\{#1\}}}
\newcommand\Chi*[1]{\chi_{\raise-.5ex\hbox{$\scriptstyle#1$}}}
\DeclarePairedDelimiter{\paren}{(}{)}
\DeclarePairedDelimiter{\set}{\{}{\}}
\DeclarePairedDelimiter{\abs}{\lvert}{\rvert}
\DeclarePairedDelimiter{\norm}{\lVert}{\rVert}
\newcommand{\delt}{\partial_t}
\newcommand{\del}{\partial}
\newcommand{\grad}{\nabla}
\renewcommand{\leq}{\leqslant}
\renewcommand{\geq}{\geqslant}
\newcommand{\T}{\mathbb T}
\newcommand{\defeq}{\stackrel{\scriptscriptstyle\text{def}}{=}}
\DeclareMathOperator{\Lip}{Lip}
\DeclareMathOperator{\supp}{supp}
\newcommand{\inv}{^{-1}}
\newcommand{\lap}{\Delta}
\providecommand{\cites}[1]{\cite{#1}}
\newcommand{\sidenote}[1]{%
  \ifdraft
    \marginpar[\raggedleft\tiny #1]{\raggedright\tiny #1}%
  \fi%
}
\begin{document}

  \title%
    [\ifdraft DRAFT: \fi Lower bounds on the mix norm of passive scalars]
    {\ifdraft DRAFT: \fi Lower bounds on the mix norm of passive scalars advected by incompressible enstrophy-constrained flows.}
  \begin{abstract}
    Consider a diffusion-free passive scalar $\theta$ being mixed by an incompressible flow $u$ on the torus $\Tm^d$.
    Our aim is to study how well this scalar can be mixed under an enstrophy constraint  on the advecting velocity field.
    Our main result shows that the mix-norm ($\norm{\theta(t)}_{H^{-1}}$) is bounded below by an exponential function of time.
    The exponential decay rate we obtain is not universal and depends on the size of the support of the initial data.
    We also perform numerical simulations and confirm that the numerically observed decay rate scales similarly to the rigorous lower bound, at least for a significant initial period of time.
    The main idea behind our proof is to use recent work of Crippa and DeLellis ('08) making progress towards the resolution of Bressan's rearrangement cost conjecture.
  \end{abstract}

  \author{Gautam Iyer}
  \address{\hskip-\parindent
  Gautam Iyer\\
  Department of Mathematics\\
  Carnegie Mellon University \\
  Pittsburgh, PA 15213}
  \email{gautam@math.cmu.edu}

  \author{Alexander Kiselev}
  \address{\hskip-\parindent
  Alexander Kiselev\\
  Department of Mathematics\\
  University of Wisconsin-Madison\\
  Ma\-dison, WI 53706, USA}
  \email{kiselev@math.wisc.edu}

  \author{Xiaoqian Xu}
  \address{\hskip-\parindent
  Xiaoqian Xu\\
  Department of Mathematics\\
  University of Wisconsin-Madison\\
  Madison, WI 53706, USA}
  \email{xxu@math.wisc.edu}

  \thanks{This material is based upon work partially supported by the National Science Foundation under grants
  DMS-1007914, 
  DMS-1104415, 
  DMS-1159133, 
  DMS-1252912. 
  GI acknowledges partial support from an Alfred P. Sloan research fellowship. AK acknowledges partial support from a Guggenheim fellowship.
  The authors also thank the Center for Nonlinear Analysis (NSF Grants No. DMS-0405343 and DMS-0635983), where part of this research was carried out. }

  \maketitle

  \section{Introduction}

  The mixing of tracer particles by fluid flows is ubiquitous in nature, and have applications ranging from weather forecasting to food processing.
  An important question that has attracted attention recently is to study ``how well'' tracers can be mixed under a constraint on the advecting velocity field, and what is the optimal choice of the ``best mixing'' velocity field (see~\cite{bblThiffeault12} for a recent review).

  Our aim in this paper is to study how well passive tracers can be mixed under an \emph{enstrophy constraint} on the advecting fluid.
  By passive, we mean that the tracers provide no feedback to the advecting velocity field.
  Further, we assume that diffusion of the tracer particles is weak and can be neglected on the relevant time scales.
  Mathematically, the density of such tracers (known as passive scalars) is modeled by the transport equation
  \begin{equation}\label{eq:1}
    \partial_t\theta(x,t)+u\cdot \nabla\theta=0, \,\,\,\theta(x,0) = \theta_0(x).
  \end{equation}
  To model stirring, the advecting velocity field~$u$ is assumed to be incompressible.
  For simplicity we study~\eqref{eq:1} with periodic boundary conditions (with period $1$), mean zero initial data, and assume that all functions in question are smooth.

  The first step is to quantify ``how well'' a passive scalar is mixed in our context. For \emph{diffusive} passive scalars, the decay of the variance is a commonly used measure of mixing (see for instance \cites{bblConstantinKiselevRyzhikZlatos08,bblDoeringThiffeault06,bblThiffeaultDoeringGibbon04,bblShawThiffeaultDoering07} and references there in).
  But for diffusion free scalars the variance is a conserved and does not change with time.
  Thus, following~\cite{bblLinThiffeaultDoering11} we quantify mixing using the $H^{-1}$-Sobolev norm: the \emph{smaller} $\norm{\theta}_{H^{-1}}$, the better mixed the scalar $\theta$ is.

  The reason for using a negative Sobolev norm in this context has its roots in~\cites{bblDoeringThiffeault06,bblLinThiffeaultDoering11,bblMathewMezicPetzold05,bblShawThiffeaultDoering07}.
  The motivation is that if the flow generated by the velocity field is mixing in the ergodic theory sense, then any advected quantity (in particular $\theta$) converges to $0$ weakly in $L^2$ as $t \to \infty$.
  This can be shown to imply that $\norm{\theta(\cdot, t)}_{H^{s}} \to 0$ for all $s < 0$, and conversely, if $\norm{\theta(\cdot, t)}_{H^{s}} \to 0$ for some $s < 0$ then $\theta(x,t)$ converges
  weakly to zero. Thus any negative Sobolev norm of $\theta$ can in principle be used to quantify its mixing properties.
  In two dimensions the choice of using the $H^{-1}$ norm in particular was suggested by Lin et.\ al.~\cite{bblLinThiffeaultDoering11} as it scales like the area dominant unmixed regions; a natural length scale associated with the system.
  We will work with the same Sobolev norm in any dimension $d$; the ratio of $H^{-1}$ norm to $L^2$ norm has a dimension of length, and since the $L^2$ norm of $\theta(x,t)$ is conserved, the $H^{-1}$ norm provides a natural length scale associated with the mixing process.

  The questions we study in this paper are motivated by recent work of Lin et.\ al.~\cite{bblLinThiffeaultDoering11}.
  In~\cite{bblLinThiffeaultDoering11}, the authors address two questions on the two dimensional torus:
  \begin{itemize}
    \item
      The time decay of $\norm{\theta(t)}_{H^{-1}}$, given the \emph{fixed energy} constraint $\norm{u(t)}_{L^2} = U$.

    \item
      The time decay of $\norm{\theta(t)}_{H^{-1}}$ given a \emph{fixed enstrophy} constraint of the form $\norm{\grad u(t)}_{L^2} = F$.
  \end{itemize}

  In the first case the authors prove a lower bound for~$\norm{\theta(\cdot, t)}_{H^{-1}(\T^2)}$ that is linear in $t$, with negative slope.
  This suggests that it may be possible to ``mix perfectly in finite time''; namely choose $u$ in a manner that drives $\norm{\theta(\cdot, t)}_{H^{-1}}$ to zero in finite time.
  This was followed by an explicit example in~\cite{bblLunasinLinNovikovMazzucatoDoering12} exhibiting finite time perfect mixing, under a finite energy constraint.
  This example uses an elegant ``slice and dice'' construction, which requires the advecting velocity field to develop finer and finer scales.
  Thus, while their example maintains a fixed energy constraint, the enstrophy ($\norm{\grad u}_{L^2}$) explodes.
  Together with the numerical analysis in~\cites{bblLinThiffeaultDoering11,bblLunasinLinNovikovMazzucatoDoering12} this suggests that finite time perfect mixing by an enstrophy constrained incompressible flow might be impossible.
  Our main theorem settles this affirmatively.
  \begin{theorem}\label{thmMain}
    Let $u$ be a smooth (time dependent) incompressible periodic vector field on the $d$-dimensional torus, and let $\theta$ solve~\eqref{eq:1} with periodic boundary conditions and $L^\infty$ initial data $\theta_0$.
    For any $p > 1$ and $\lambda \in (0,1)$ there exists a length scale $r_0 = r_0( \theta_0, \lambda),$ an explicit constant $\eps_0 = \eps_0(\lambda, d),$ and a constant $c = c(d, p)$ such that
    \begin{equation}\label{eq:solve}
      \norm{\theta(t)}_{H^{-1}}
	\geq \eps_0 r_0^{d/2+1} \norm{\theta_0}_{L^\infty} \exp \paren[\Big]{ \frac{-c}{m \paren{ A_\lambda }^{1/p} } \int_0^t \norm{\grad u(s)}_{L^p} \, ds }.
    \end{equation}
    Here $A_\lambda$ is the super level set $\{ \theta_0 > \lambda \norm{\theta_0}_{L^\infty} \}$.

    In particular, if the instantaneous enstrophy constraint $\norm{\grad u}_{L^2} \leq F$ is enforced, then $\norm{\theta(t)}_{H^{-1}}$ decays at most exponentially with time.
  \end{theorem}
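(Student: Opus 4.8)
The plan is to convert the desired $H^{-1}$ bound into a purely geometric statement — that an incompressible flow with controlled $\int_0^t\norm{\grad u}_{L^p}$ cannot ``mix'' a lump of the super-level set $A_\lambda$ below an exponentially small scale — and to obtain that statement from the Crippa--DeLellis a priori estimate for the flow map. Throughout write $M=\norm{\theta_0}_{L^\infty}$, let $\Phi_t$ be the (volume-preserving) flow of $u$, and set $\mathcal L=\int_0^t\norm{\grad u(s)}_{L^p}\,ds$. Since $\theta(t)=\theta_0\circ\Phi_t^{-1}$, the set $\{\theta(\cdot,t)>\lambda M\}$ equals $\Phi_t(A_\lambda)$, which has the same measure as $A_\lambda$. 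First I would make a Lebesgue-density reduction: since $m(A_\lambda)>0$, there are a point $x_0$ and a radius $r_0=r_0(\theta_0,\lambda)$, which we may take comparable to $m(A_\lambda)^{1/d}$ (in any case $m(A_\lambda)\gtrsim r_0^d$), such that $A_\lambda$ has density at least $1-\eta$ in $B_{r_0}(x_0)$, where $\eta=\eta(\lambda,d)$ is a small constant fixed below.

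The core step, where Crippa--DeLellis enters, is the following claim: there are a constant $c=c(d,p)$ and a point $y_t$ such that, with
\[
  \delta\defeq r_0\exp\paren[\Big]{\frac{-c\,\mathcal L}{m(A_\lambda)^{1/p}}},
\]
the set $\Phi_t(A_\lambda)$ has density at least $1-C_d\eta$ in $B_\delta(y_t)$, $C_d$ a dimensional constant. To prove it I would run the Crippa--DeLellis maximal-function argument on the backward flow $\Phi_t^{-1}$, which obeys the same $W^{1,p}$ bound: differentiate the logarithm of the separation between two flow trajectories in time, bound the relevant difference quotient of $u$ by a constant times the Hardy--Littlewood maximal function of $\grad u$ along the flow, and use boundedness of that operator on $L^p$ ($p>1$). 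Integrating against $\grad u$ over the region swept by the blob and applying H\"older's inequality is what produces the $m(A_\lambda)^{-1/p}$ weight: off an exceptional set $K$ with $m(K)\ll\eta\,m(A_\lambda)$, the backward flow distorts lengths $\le\delta$ by at most a factor $\exp(c\mathcal L/m(A_\lambda)^{1/p})$. Now transport the near-full-density ball $B_{r_0/2}(x_0)$ forward, cover its image by balls of radius $\delta$ with bounded overlap, and pull these back: if every such ball had $\Phi_t(A_\lambda)$-density below $1-C_d\eta$, the preimages — of diameter $\lesssim\delta\exp(c\mathcal L/m(A_\lambda)^{1/p})\le r_0$, and each still carrying a definite fraction of $A_\lambda^{c}$ — would cover $B_{r_0}(x_0)$ with bounded overlap and force $A_\lambda$ to have density below $1-\eta$ there, contradicting the choice of $B_{r_0}(x_0)$.

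Granting the claim, the theorem follows by soft duality. Fix $\eta=\eta(\lambda,d)$ so small that $C_d\eta<\lambda/4$. On $B_\delta(y_t)$ we then have $\theta(\cdot,t)\ge\lambda M$ off a set of measure $<(\lambda/4)\,m(B_\delta)$ and $\theta(\cdot,t)\ge-M$ everywhere, so $\int_{B_\delta(y_t)}\theta(\cdot,t)\,dx\ge\tfrac12\lambda M\,m(B_\delta)=c'(\lambda,d)\,M\delta^{d}$. Testing against a smooth $\phi$ with $\phi\equiv1$ on $B_\delta(y_t)$, $\supp\phi\subset B_{2\delta}(y_t)$, $\norm{\grad\phi}_{L^\infty}\lesssim\delta^{-1}$ (so $\norm{\phi}_{H^1}\lesssim\delta^{d/2-1}$ since $\delta<1$), and using $\int\theta(\cdot,t)\phi\,dx\le\norm{\theta(t)}_{H^{-1}}\norm{\phi}_{H^1}$ together with the mean-zero property of $\theta$ to control the annular contribution $B_{2\delta}\setminus B_\delta$, gives $\norm{\theta(t)}_{H^{-1}}\gtrsim_{\lambda,d}M\delta^{d/2+1}$. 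Substituting $\delta$, absorbing $d/2+1$ into $c$, and renaming constants yields \eqref{eq:solve} with an explicit $\eps_0=\eps_0(\lambda,d)$; taking $p=2$ and $\mathcal L\le Ft$ gives the final assertion.

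The hard part is the core claim: extracting from the Crippa--DeLellis estimate — which a priori only bounds the flow's distortion in an averaged ($L^p$) sense — the clean statement that a \emph{definite} lump of $A_\lambda$ persists at the exponentially small scale $\delta$. The delicate points are (i) balancing the distortion factor against $m(K)$ so that $K$ is genuinely negligible on the scale used in the covering argument while the distortion stays $\le\exp(c\mathcal L/m(A_\lambda)^{1/p})$ — this is where H\"older's inequality trades the $L^p$ norm of $\grad u$ against the volume $m(A_\lambda)$ of the swept region; and (ii) propagating ``density close to $1$'' (not merely ``density bounded below'') through the flow, which is exactly what keeps the amplitude of $\theta(\cdot,t)$ comparable to $M$ on the surviving ball and makes the final test function work. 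The Lebesgue-density reduction and the duality bound are routine.
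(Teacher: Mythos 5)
Your overall architecture matches the paper's: relate ``$\norm{\theta(t)}_{H^{-1}}$ small'' to the super-level set being mixed to scale $\delta$ via a test-function/duality bound, and get a lower bound on $\int_0^t\norm{\grad u}_{L^p}$ from the Crippa--De Lellis Lipschitz estimate for $\Psi_t^{-1}$ off an exceptional set, via a covering argument. However, there is a genuine gap exactly at the point that produces the $m(A_\lambda)^{-1/p}$ exponent, which is the main content of the theorem. Your reduction uses a \emph{single} ball $B_{r_0}(x_0)$ in which $A_\lambda$ has density $\geq 1-\eta$, and you assert $r_0$ ``may be taken comparable to $m(A_\lambda)^{1/d}$.'' The Lebesgue density theorem gives no such thing: $A_\lambda$ can have large measure while having density close to $1$ only on balls of arbitrarily small radius (e.g.\ data oscillating at a fine scale on half the torus), so in general only $r_0^d \lesssim m(A_\lambda)$ holds, which is the useless direction. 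This matters because in the Crippa--De Lellis estimate the distortion factor is $\exp\paren{c\,\mathcal L/\eta_{CD}^{1/p}}$ where $\eta_{CD}$ is the measure of the exceptional set; to get the claimed exponent you must take $\eta_{CD}\sim m(A_\lambda)$, but then the exceptional set can have measure far exceeding $m(B_{r_0}(x_0))$ and can swallow your single ball (equivalently, every $\delta$-ball covering $\Phi_t(B_{r_0/2}(x_0))$ can be heavily contaminated), so the covering/density contradiction collapses. If instead you shrink $\eta_{CD}\lesssim r_0^d$ to keep the contamination negligible inside the ball, the exponent degrades to $c\,\mathcal L/r_0^{d/p}$ — this is the weaker bound (the paper's variant with $N=1$), not \eqref{eq:solve}. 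The paper closes this gap with a step your proposal is missing: using Lebesgue density plus the Vitali lemma it extracts a \emph{disjoint family} of balls, each with $A_\lambda$-density $>1-\eps$, whose union captures a fixed fraction $\geq m(A_\lambda)/(2\cdot 3^d)$ of $A_\lambda$, sets $r_0=\min_i r_i$, chooses the exceptional budget proportional to $\sum_i m(B_i)\gtrsim m(A_\lambda)$, and then pigeonholes to find one ball $B_{i_0}$ that is only mildly contaminated; only then does the pair-of-points / Lipschitz-constant contradiction run.

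A secondary, fixable flaw is in your duality step: with $\phi\equiv 1$ on $B_\delta$ and $\supp\phi\subset B_{2\delta}$, the contribution of the annulus can be as negative as $-M(2^d-1)m(B_\delta)$, which overwhelms $\tfrac12\lambda M\,m(B_\delta)$, and the global mean-zero property of $\theta$ does not control this local term. The paper's Lemma 2.2 avoids it by taking a thin transition collar of width $\eps\delta$ with $\eps=\eps(\lambda,d)$ small (paying $\eps^{-1/2}$ in $\norm{\phi}_{H^1}$), which is also where the explicit prefactor $\eps_0(\lambda,d)$ comes from; you would need this (or a density statement on $B_{2\delta}$) to make the last step correct.
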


  Before commenting on the $r_0$ and $m(A_\lambda)$ dependence, we briefly mention some applications.
  There are many physical situations where $\int_0^t \norm{\grad u(s)}_{L^2} \,ds$ is well controlled.
  Some examples are when $u$ satisfies the incompressible Navier-Stokes equations with $\dot H^{-1}$ forcing~\cite{bblDoeringGibbon95,bblConstantinFoias88}, the 2D incompressible Euler equations~\cite{bblBardosTiti07} or a variety of active scalar equations including the critical surface quasi-geostrophic equation~\cite{bblKiselevNazarovVolberg07,bblConstantinVicol12,bblCaffarelliVasseur10,bblChaeConstantinCordobaGancedoWu12}.
  In each of these situations the passive scalars can not be mixed perfectly in finite time.
  More precisely, a lower bound for the $H^{-1}$-norm of the scalar density can be read off using~\eqref{eq:solve} and the appropriate control on $\norm{\grad u}_{L^2}$.

  We also mention that the proof of this theorem is not based on energy methods.
  Instead, the main idea is to relate the notion of ``mixed to scale~$\delta$'' to the $H^{-1}$ norm, and use recent progress by Crippa and DeLellis~\cite{bblCrippaDeLellis08} towards Bressan's rearrangement cost conjecture~\cite{bblBressan03}.
  Some of these ideas were already suggested in~\cite{bblLunasinLinNovikovMazzucatoDoering12}.
  \medskip

  We defer the proof of Theorem~\ref{thmMain} to Section~\ref{sxnMainTheorem}, and
  pause to analyze the dependence of the bound in \eqref{eq:solve} on $r_0$ and $m(A_\lambda)$.

  The length scale $r_0$ is morally the scale at which the super level set $A_\lambda$ is ``unmixed''; a notion that is made precise later.
  Our proof, however, imposes a slightly stronger condition: namely, our proof will show that $r_0$ can be any length scale such that ``most'' of the super level set $A_\lambda$ occupies ``most'' of the union of disjoint balls of radius \emph{at least} $r_0$.
  While we are presently unable to estimate $r_0$ in terms of a tangible norm of $\theta_0$, we remark that we at least expect a connection between $r_0$ and the ratio of the measure of $A_\lambda$ to the perimeter of $A_\lambda$ (see~\cite{bblSlepcev08} for a related notion).

  On the other hand, we point out that the pre-factor in~\eqref{eq:solve} can be improved at the expense of the decay rate.
  To see this, suppose for some $\kappa \in [0, 1/2)$ there exists $N$ disjoint balls of radius at least $r_1$ such that the fraction of each of these balls occupied by $A_\lambda$ is \emph{at least} $1 - \kappa$. Then our proof will show that~\eqref{eq:solve} in Theorem~\ref{thmMain} can be replaced by
    \begin{gather}\label{eqnSolvePrime}
      \tag{\ref{eq:solve}$'$}
      \norm{\theta(t)}_{H^{-1}}
	\geq \eps_0 r_1^{d/2+1} \norm{\theta_0}_{L^\infty} \exp \paren[\Big]{ \frac{-c}{\paren{ N r_1^d }^{1/p} } \int_0^t \norm{\grad u(s)}_{L^p} \, ds }.
    \end{gather}
    In this case, if $\theta_0 \in C^1$, the mean value theorem will guarantee that we can choose $N = 1$ and $r_1 > \frac{\norm{\theta_0}_{L^\infty}}{C \norm{\grad \theta_0}_{L^\infty}}$ for a purely dimensional constant $C$.

  Next we turn to the exponential decay rate.
  The dependence of this on $m(A_\lambda)$ is natural.
  To see this, suppose momentarily that $\theta_0$ only takes on the values $\pm1$ or $0$ representing two insoluble, immiscible fluids which are injected into a large  fluid container.
  Physical intuition suggests that the less the amount of fluid that is injected, the faster one can mix it.
  Indeed, this is reflected in~\eqref{eq:solve} as in this case $m(A_\lambda) = \frac{1}{2} m( \supp( \theta_0 ) )$;
    so the smaller the support of the initial data, the worse the lower bound~\eqref{eq:solve} is.
  We mention that a bound similar to~\eqref{eq:solve} was proved in~\cite{bblSeis} using optimal transport and ideas from~\cite{bblBrenierOttoSeis}.
  In~\cite{bblSeis}, however, the author only considers bounded variation ``binary phase'' initial data, where the two phases occupy the entire region;
  consequently the result does not capture the dependence of the decay rate on the initial data.



  We do not know if the estimate of the exponential decay provided by bound~\eqref{eq:solve} is optimal, however, numerical simulations suggest that it may be not far off.
  A good candidate for the velocity field that might achieve the optimal lower bound was presented in~\cite{bblLinThiffeaultDoering11} using a steepest descent method (equation~\eqref{eqnVelocity}, below).
  Due to the nonlinear nature of this formula, it is hard to rigorously prove upper bounds; but all our numerical simulations in Section~\ref{sxnNumerics} seem to indicate an exponential lower bound with a decay rate that is in a good qualitative agreement with~\eqref{eq:solve}.


However, our numerical simulations show that even if we start with initial data that is localized to a small region, it gradually spreads during mixing.
  The incompressibility constraint will of course guarantee that the measure of the support of the solution is conserved in time.
  But since the enstrophy constraint forbids abrupt changes in the velocity field,
  the ``region occupied'' by the initial data tends to spread and is likely to eventually ``fill'' the entire torus (see Figure~\ref{fgrSolPlots}).

  A very interesting question is whether there will eventually be a transition in dynamics where
  the factor in the exponential decay of the mix norm depends only on the volume of the entire domain, and not the size of the region occupied by the initial data.
  Our attempts to get insight into this question numerically were inconclusive, as we ran out of resolution before observing such a regime change.
  What we address here, however, is an interesting link between universality of the exponential lower bound and mixing in domains with boundaries.

  It has been observed formally \cite{bblGouillartKuncioDauchotDubrulleRouxThiffeault,bblGouillartDauchotDubrulleRouxThiffeault} that presence of walls with no slip conditions inhibits mixing; hyperbolic flows which usually lead to exponential decay of various mixing measures lead to only algebraic mixing rates in presence of walls.
  Here, we provide an elementary and rigorous argument showing that universality of the exponential lower bound in mixing with an enstrophy constraint would lead to an algebraic in time lower bound if the initial data is compactly supported away from the boundary and the advecting velocity field vanishes at the boundary.
  Agreement with earlier heuristic arguments is intriguing; but it is not clear to us if one can expect such result to be true in full generality.
  It would be very interesting to know whether the efficient mixing by an incompressible enstrophy constrained flow spreads the initial data over the entire ambient volume and results in the slowdown of the exponential decay.
  We plan to further investigate this issue in the future.

  \subsection*{Notational convention, and plan of this paper.}

  \sidenote{Sat 07/20 GI: Revisit this when the paper is actually done.}
  We will assume throughout the paper that $d \geq 2$ is the dimension, and $\T^d$ is the $d$-dimensional torus, with side length $1$.
  All periodic functions are assumed to be $1$-periodic, and we use $m$ to denote the Lebesgue measure on $\T^d$.
  We will use $\norm{f}_{H^s}$ to denote the \emph{homogeneous} Sobolev norms.

  This paper is organized as follows: In Section~\ref{sxnMainTheorem} we describe the notion of $\delta$-mixed data, and prove Theorem~\ref{thmMain}, modulo a few Lemmas.
  In Section~\ref{sxnLemmaProofs} we prove the required lemmas.
  In Section~\ref{sxnNumerics} we present numerics suggesting that the bound stated in Theorem~\ref{thmMain} is not far from optimal, at least for an initial period of time.
  Finally, we conclude this paper with a scaling argument showing that an exponential lower bound on $\norm{\theta}_{H^{-1}}$ with rate independent of $\theta_0$ will imply a stronger \emph{algebraic} lower bound
  for mixing with flows satisfying no-slip boundary condition.

  \section{Rearrangement Costs and the Proof of the Main Theorem.}\label{sxnMainTheorem}

  We devote this section to the proof of Theorem~\ref{thmMain}.
  The idea behind the proof is as follows.
  First, if $\norm{\theta}_{H^{-1}}$ is small enough, then its super-level sets are mixed to certain scales (Lemma~\ref{lmaMixNorm} below).
  Second, any flow that starts with an ``unmixed'' set and mixes it to scale $\delta$ has to do a minimum amount of work~\cites{bblBressan03,bblCrippaDeLellis08}.
  Putting these together yields Theorem~\ref{thmMain}.

  We begin by describing the notion of ``mixed to scale $\delta$'', and relate this to the $H^{-1}$ Sobolev norm.
  \begin{definition}\label{dfnSemiMixed} 
    Let $\kappa \in (0, \frac{1}{2})$ be fixed.
    For $\delta > 0$, we say a set $A\subseteq \mathbb{T}^d$ is $\delta$-semi-mixed if
    $$
      \frac{m\paren[\big]{A \cap B(x, \delta) }}{m( B(x,\delta))}\leq 1-\kappa
      \quad\text{for every } x \in \T^d.
    $$
    If additionally $A^c$ is also $\delta$-semi-mixed, then we say $A$ is $\delta$-mixed (or mixed to scale $\delta$).
  \end{definition}
  \begin{remark}
    The parameters $\delta$ and $\kappa$ measures the scale and ``accuracy'' respectively.
    The key parameter here is the scale $\delta$, and the accuracy parameter $\kappa \in (0, 1/2)$ only plays an auxiliary role.
    Given a specific initial distribution to mix, $\kappa$ can be chosen to optimize the bound.

    Note that the notion of a set being mixed here is the same as that of Bressan~\cite{bblBressan03}.
    A set being semi-mixed is of course a weaker notion.
  \end{remark}

  One relation between $\delta$-semi-mixed and negative Sobolev norms is as follows.
  \begin{lemma}\label{lmaMixNorm}
    Let $\lambda \in (0, 1]$ and $\theta \in L^\infty(\T^d)$.
    Then for any integer $n > 0$, $\kappa \in (0, \frac{\lambda}{1 + \lambda} )$ there exists an explicit constant $c_0 = c_0(d, \kappa, \lambda, n)$ such that
    $$
      \norm{\theta}_{H^{-n}}\leq \frac{\norm{\theta}_{L^\infty} \delta^{n + d/2}}{c_0}
      \implies
      A_\lambda \text{ is $\delta$-semi-mixed.}
    $$
    Here $A_\lambda$ is the super level set defined by
      $A_\lambda \defeq \{\theta > \lambda \norm{\theta}_{L^\infty} \}$.
  \end{lemma}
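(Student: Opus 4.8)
The plan is to establish the contrapositive: if $A_\lambda$ is \emph{not} $\delta$-semi-mixed, then $\norm{\theta}_{H^{-n}}$ must exceed $\norm{\theta}_{L^\infty}\delta^{n+d/2}/c_0$ for a suitable $c_0 = c_0(d,\kappa,\lambda,n)$. The mechanism is to pair $\theta$ against a smooth bump concentrated on a ball where $A_\lambda$ has density larger than $1-\kappa$: the super-level-set condition forces this pairing to be large, while duality and scaling bound it by $\norm{\theta}_{H^{-n}}\delta^{d/2-n}$.

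\textbf{Step 1 (the bad ball and the test function).} Suppose $A_\lambda$ is not $\delta$-semi-mixed, so there is $x_0\in\T^d$ with $m\paren{A_\lambda\cap B(x_0,\delta)} > (1-\kappa)\,m\paren{B(x_0,\delta)}$. Fix a cutoff $\psi\in C_c^\infty\paren{B(0,1)}$ with $0\le\psi\le1$ and $\psi\equiv1$ on $B(0,1-\eta)$, where $\eta=\eta(d,\kappa,\lambda)\in(0,1)$ is pinned down in Step 2, and set $\phi(y)\defeq\psi\paren{\delta^{-1}(y-x_0)}$, so that $\supp\phi\subseteq B(x_0,\delta)$. (We may assume $\delta$ is below a purely dimensional threshold, so that this ball sits inside a fundamental domain; this is the only regime relevant to Theorem~\ref{thmMain}.)

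\textbf{Step 2 (lower bound for $\int\theta\phi$).} Write $m(B)\defeq m(B(x_0,\delta))=\omega_d\delta^d$, $\omega_d\defeq m(B(0,1))$. Using $\theta>\lambda\norm{\theta}_{L^\infty}$ on $A_\lambda$ and $\abs{\theta}\le\norm{\theta}_{L^\infty}$ everywhere,
\begin{equation*}
  \int_{\T^d}\theta\phi\;\ge\;\lambda\norm{\theta}_{L^\infty}\int_{A_\lambda}\phi-\norm{\theta}_{L^\infty}\int_{B(x_0,\delta)\setminus A_\lambda}\phi .
\end{equation*}
The second integral is at most $m\paren{B(x_0,\delta)\setminus A_\lambda}<\kappa\,m(B)$. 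For the first, $\phi\ge\mathbf 1_{B(x_0,(1-\eta)\delta)}$, so $\int_{A_\lambda}\phi\ge m\paren{A_\lambda\cap B(x_0,(1-\eta)\delta)}\ge m\paren{B(x_0,(1-\eta)\delta)}-m\paren{B(x_0,\delta)\setminus A_\lambda}>\paren{(1-\eta)^d-\kappa}m(B)$. Hence
\begin{equation*}
  \int_{\T^d}\theta\phi\;>\;\norm{\theta}_{L^\infty}\,m(B)\,\paren[\big]{\lambda(1-\eta)^d-(1+\lambda)\kappa}.
\end{equation*}
Since $\kappa<\lambda/(1+\lambda)$ we have $(1+\lambda)\kappa/\lambda<1$, so we may choose $\eta$ small enough that $(1-\eta)^d>(1+\lambda)\kappa/\lambda$; set $\beta\defeq\lambda(1-\eta)^d-(1+\lambda)\kappa>0$, depending only on $d,\kappa,\lambda$. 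Then $\int\theta\phi>\beta\,\omega_d\norm{\theta}_{L^\infty}\delta^d$.

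\textbf{Step 3 (upper bound and conclusion).} Since $\theta$ has zero average, duality gives $\int_{\T^d}\theta\phi\le\norm{\theta}_{H^{-n}}\norm{\phi}_{H^n}$, and a standard scaling estimate for the homogeneous Sobolev norm of $\phi=\psi\paren{\delta^{-1}(\cdot-x_0)}$ yields $\norm{\phi}_{H^n}\le C_1\delta^{d/2-n}$ with $C_1=C_1(d,\kappa,\lambda,n)$ (the $\psi$-, hence $\eta$-, dependence enters here). Combining,
\begin{equation*}
  \norm{\theta}_{H^{-n}}\;>\;\frac{\beta\omega_d}{C_1}\,\norm{\theta}_{L^\infty}\,\delta^{n+d/2},
\end{equation*}
which is exactly the negation of the conclusion, with $c_0\defeq C_1/(\beta\omega_d)$.

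\textbf{Main obstacle.} There is nothing deep, but the point requiring care is the interplay between $\kappa$ and $\lambda$: the threshold $\kappa<\lambda/(1+\lambda)$ is precisely what ensures that, even when $A_\lambda$ is allowed to miss a $\kappa$-fraction of the ball and $\theta$ may be as negative as $-\norm{\theta}_{L^\infty}$ there, the positive contribution $\lambda\norm{\theta}_{L^\infty}$ on the bulk still wins. The auxiliary parameter $\eta$ is then inserted so that a genuinely smooth $\phi$ (rather than $\mathbf 1_{B(x_0,\delta)}$) works, at the cost of letting $c_0$ depend on $\kappa,\lambda$ in addition to $d,n$. The only other item is the scaling of $\norm{\phi}_{H^n}$ on the torus, which is routine once $\supp\phi$ lies in a fundamental domain.
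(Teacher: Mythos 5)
Your proof is correct and follows essentially the same route as the paper: negate the $\delta$-semi-mixed condition to find a ball where $A_\lambda$ has density above $1-\kappa$, test $\theta$ against a smooth approximation of that ball's indicator, and combine the resulting lower bound on the pairing (using $\kappa < \lambda/(1+\lambda)$) with duality and the scaling $\norm{\phi}_{H^n} \lesssim \delta^{d/2-n}$. The only difference is cosmetic: you shrink the plateau to $B(x_0,(1-\eta)\delta)$ while the paper lets the cutoff extend to $B(x,(1+\eps)\delta)$, and the paper additionally tracks the $\eps$-dependence to record $c_0 \sim (\lambda-(1+\lambda)\kappa)^{n-1/2}$ explicitly.
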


  Our interest in this Lemma is mainly when $n = 1$.
  Note that while Lemma~\ref{lmaMixNorm} guarantees the super level sets $A_\lambda$ are $\delta$-semi-mixed, they need not be $\delta$-mixed.
  Indeed if $A_\lambda$ is very small, its complement won't be $\delta$-semi-mixed.
  Also, we remark that the converse of Lemma~\ref{lmaMixNorm} need not be true.
  For example the function
  $$
    f(x) = \sin(2 \pi x) + 10 \sin( 2\pi n x )
  $$
  has $\norm{f}_{H^{-1}(\T^1)} = O(1)$, and the super level set $\{ f > 5 \}$ is certainly semi-mixed to scale $1/n$ (see also~\cite{bblLinThiffeaultDoering11}).

  The proof of Lemma~\ref{lmaMixNorm} follows from a duality and scaling argument.
  For clarity of presentation we postpone the proof to Section~\ref{sxnLemmaProofs}.
  Returning to Theorem~\ref{thmMain}, the main ingredient in its proof is a lower bound on the ``amount of work'' required to mix a set to fine scales.
  This notion goes back to a conjecture of Bressan for which a \$500 prize was announced~\cite{bblBressanPrize}.
  \begin{conjecture}[Bressan '03~\cite{bblBressan03}]\label{cjrBressan}
    Let $H$ to be the left half of the torus, and $\Psi$ be the flow generated by
    an incompressible
    vector field $u$.
    If after time $T$ the image of $H$ under the flow $\Psi$ is $\delta$-mixed, then there exists a constant $C$ such that
    \begin{equation}\label{eqnBressan}
      \int_0^T \norm{\grad u(\cdot, t)}_{L^1} \, dt \geq \frac{ \abs{\ln \delta } }{C} .
    \end{equation}
  \end{conjecture}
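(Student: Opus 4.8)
\emph{The plan} is to push the logarithmic-functional method of Crippa--DeLellis all the way to the $L^1$ endpoint demanded by the conjecture, measuring the ``stretching'' done by the inverse flow against the accumulated enstrophy. Let $Y$ denote the inverse of the time-$T$ map $\Psi_T$; this is the flow of the time-reversed field $v(s,x) = -u(T-s,x)$, so $\int_0^T \norm{\grad v(s)}_{L^1}\,ds = \int_0^T \norm{\grad u(t)}_{L^1}\,dt$ and the enstrophy budget is unchanged. For a fixed shift $h$ set
\[
  g(t) = \int_{\T^d} \log\Big(1 + \frac{\abs{Y(t,z+h) - Y(t,z)}}{\abs h}\Big)\,dz.
\]
First I would establish the geometric lower bound $g(T) \gtrsim \abs{\ln \delta}$ directly from the mixing hypothesis, with no reference to $u$: choosing $\abs h \sim \delta$ and using that $\Psi_T(H)$ is $\delta$-mixed, a positive-measure family of pairs $(z, z+h)$ straddles $\Psi_T(H)$ and its complement; since $Y$ carries these into $H$ and $H^c$ and a positive fraction of $H^c$ sits at macroscopic distance from the flat interface of $H$, each such pair contributes $\log(1 + c/\delta) \sim \abs{\ln \delta}$. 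This is exactly the lower-bound step already underlying the $p > 1$ estimate used in Theorem~\ref{thmMain}, and it is completely insensitive to the exponent $p$.

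\emph{The dynamical estimate} is where the exponent enters. Differentiating $g$ and invoking the pointwise inequality $\abs{u(a) - u(b)} \le C \abs{a-b}\big(M\abs{\grad u}(a) + M\abs{\grad u}(b)\big)$, valid for a.e.\ $a,b$ with $M$ the Hardy--Littlewood maximal operator, gives
\[
  g'(t) \le C \int_{\T^d} \Big( M\abs{\grad u(t)}\big(Y(t,z+h)\big) + M\abs{\grad u(t)}\big(Y(t,z)\big) \Big)\,dz .
\]
Because $Y(t,\cdot)$ is measure preserving (incompressibility), each integral collapses by change of variables to $\norm{M\abs{\grad u(t)}}_{L^1(\T^d)}$, so $g'(t) \le 2C\,\norm{M\abs{\grad u(t)}}_{L^1}$. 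Since $g(0) = O(1)$, integrating in time and combining with the previous paragraph yields $\abs{\ln \delta} \le C \int_0^T \norm{M\abs{\grad u(t)}}_{L^1(\T^d)}\,dt$, which is precisely \eqref{eqnBressan} \emph{with the maximal function inserted}. For any $p > 1$ this closes at once: $\norm{Mf}_{L^1(\T^d)} \le m(\T^d)^{1/p'}\norm{Mf}_{L^p} \le C_p \norm{f}_{L^p}$ by the maximal theorem, recovering the conjectured inequality with $L^1$ replaced by $L^p$ and a constant $C_p \sim (p-1)\inv$.

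\emph{The hard part}, and the genuine content of the conjecture, is the endpoint $p = 1$: the bound $\norm{Mf}_{L^1} \le C\norm{f}_{L^1}$ is simply \emph{false}, as $M$ is only of weak type $(1,1)$ and maps into $L^1$ only on the smaller Zygmund class, $\norm{Mf}_{L^1(\T^d)} \lesssim \norm{f}_{L\log L}$. The blow-up $C_p \sim (p-1)\inv$ above is the quantitative shadow of this failure and is the sole reason the method stops at $p > 1$. To reach $L^1$ one must either bypass the pointwise maximal bound or recover the missing logarithm from the dynamics --- for example by replacing $M$ with a directional or sharp maximal operator adapted to the Lagrangian geometry, by exploiting incompressibility beyond the bare Jacobian identity (controlling the measure of the exceptional set where the log-Lipschitz estimate degrades, rather than applying a global change of variables), or by extracting an $L\log L$ gain for $\abs{\grad u}$ from the mixing structure itself. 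The first two paragraphs should thus be read as reducing Bressan's conjecture to a sharp, maximal-function-free replacement of the growth estimate, and closing that endpoint gap is the obstacle I expect to dominate the entire difficulty.
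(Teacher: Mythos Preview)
Your proposal is not a proof of the conjecture, and you are candid about this: the third paragraph correctly identifies the $L^1$ endpoint as the genuine obstruction and explicitly leaves it open. What you have written is a clean account of why the Crippa--DeLellis argument yields Theorem~\ref{thmCrippaDeLellis} for $p>1$ and why it stalls at $p=1$, together with some speculation about possible workarounds. That is a reasonable commentary, but it is not a proof of Conjecture~\ref{cjrBressan}.

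More to the point, the paper does not prove this statement either. Conjecture~\ref{cjrBressan} is recorded precisely as an \emph{open} conjecture; the text says so explicitly just after stating it, and the paper's actual results (Theorem~\ref{thmMain}, Lemma~\ref{lmaSemiMixedWorkDone}) rest on the $p>1$ result of Crippa and DeLellis, not on the conjecture itself. So there is no ``paper's own proof'' to compare against. Your writeup essentially reconstructs the heuristic behind the Crippa--DeLellis logarithmic functional and arrives at the same wall they do: the failure of the Hardy--Littlewood maximal operator to be bounded on $L^1$. That diagnosis is accurate and matches the state of the art as the paper presents it, but none of the suggested fixes (directional maximal operators, exploiting incompressibility beyond the Jacobian, an $L\log L$ gain from mixing) is carried out, so the conjecture remains unproved in your proposal as well.
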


  We refer the reader to~\cite{bblBressan03} for the motivation of the lower bound~\eqref{eqnBressan} and further discussion.
  To the best of our knowledge, this conjecture is still open.
  However, Crippa and De Lellis~\cite{bblCrippaDeLellis08} made significant progress towards the resolution of this conjecture.
  \begin{theorem}[Crippa, De Lellis '08~\cite{bblCrippaDeLellis08}]\label{thmCrippaDeLellis}
    Using the same notation as in Conjecture~\ref{cjrBressan}, for any $p > 1$ there exists a finite positive constant $C_p$ such that
    \begin{equation}\label{eqnCrippaDeLellis}
      \int_0^T \norm{\grad u(\cdot, t)}_{L^p} \, dt \geq \frac{\abs{\ln \delta}}{C_p}.
    \end{equation}
  \end{theorem}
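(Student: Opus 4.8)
The plan is to deduce \eqref{eqnCrippaDeLellis}, along the lines of~\cite{bblCrippaDeLellis08}, from a quantitative measure‑theoretic ``almost bi‑Lipschitz'' bound for the flow map $\Psi_T$: after discarding a set of small measure, $\Psi_T^{-1}$ is Lipschitz with constant $\exp\paren[\big]{C_p \int_0^T \norm{\grad u(t)}_{L^p}\,dt}$, and a map with such a constant cannot carry the half‑torus $H$ onto a set that is mixed to a very small scale $\delta$. Two standard ingredients drive this. The first is the pointwise inequality $\abs{f(a) - f(b)} \le C_d \abs{a-b}\bigl((M\abs{\grad f})(a) + (M\abs{\grad f})(b)\bigr)$, valid for $f \in W^{1,1}_{\mathrm{loc}}$ (in particular for smooth $f$), where $M$ is the Hardy--Littlewood maximal operator, together with the maximal function bound $\norm{Mg}_{L^p} \le C_p \norm{g}_{L^p}$ --- this is the only place the hypothesis $p>1$ is used. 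The second is incompressibility of $u$, which makes $\Psi_t$, $\Psi_t^{-1}$, and all of their compositions Lebesgue measure preserving on $\T^d$.

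First I would establish a two‑point ODE estimate and extract a good set. Fix $a, b \in \T^d$, let $Y(\cdot,a), Y(\cdot,b)$ be the trajectories of $u$ passing through $a,b$ at time $T$, so $Y(0,a) = \Psi_T^{-1}(a)$ and $Y(t,\cdot) = \Psi_t \circ \Psi_T^{-1}$ is measure preserving for each $t$. Writing $s(t) = \abs{Y(t,a) - Y(t,b)}$ and $\psi(t) = \ln\paren[\big]{1 + s(t)/\abs{a-b}}$, the bound $\abs{\dot s(t)} \le \abs{u(t,Y(t,a)) - u(t,Y(t,b))}$ together with the pointwise inequality and $s/(\abs{a-b}+s) \le 1$ give $\abs{\dot\psi(t)} \le C_d\bigl((M\abs{\grad u(t)})(Y(t,a)) + (M\abs{\grad u(t)})(Y(t,b))\bigr)$. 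Integrating over $[0,T]$ and using $\psi(T) = \ln 2$ yields
\[
  \ln\paren[\Big]{1 + \frac{\abs{\Psi_T^{-1}(a) - \Psi_T^{-1}(b)}}{\abs{a-b}}} \le \ln 2 + C_d\bigl(g(a) + g(b)\bigr),
\]
where $g(a) := \int_0^T (M\abs{\grad u(t)})(Y(t,a))\,dt$. Because $Y(t,\cdot)$ is measure preserving, Minkowski's integral inequality and the maximal function bound give $\norm{g}_{L^p(\T^d)} \le C_p \int_0^T \norm{\grad u(t)}_{L^p}\,dt =: C_p E$. Chebyshev's inequality then produces, for each $\eta \in (0,1)$, a set $K = \{\,g \le C_p E\,\eta^{-1/p}\,\}$ with $m(\T^d \setminus K) \le \eta$, on which the displayed bound becomes $\abs{\Psi_T^{-1}(a) - \Psi_T^{-1}(b)} \le L\,\abs{a-b}$ with $L := 2\exp\paren[\big]{2 C_d C_p E\,\eta^{-1/p}}$.

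Next comes the geometry. Suppose $\Psi_T(H)$ is $\delta$‑mixed with accuracy $\kappa$. Let $H_\rho = \{\,x \in H : \operatorname{dist}(x,\partial H) \ge \rho\,\}$ and $H_\rho' = \{\,x \in H^c : \operatorname{dist}(x,\partial H) \ge \rho\,\}$, so $m(H_\rho), m(H_\rho') \ge \tfrac12 - 2\rho$ and $\operatorname{dist}(H_\rho, H_\rho') \ge 2\rho$. Set $\tilde A = \Psi_T(H_\rho) \cap K$ and $\tilde A' = \Psi_T(H_\rho') \cap K$: these are disjoint, each has measure at least $\tfrac12 - 2\rho - \eta$, and since $\Psi_T^{-1}$ is $L$‑Lipschitz on $K$ and maps $\tilde A, \tilde A'$ into $H_\rho, H_\rho'$, we get $\operatorname{dist}(\tilde A, \tilde A') \ge 2\rho/L$. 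Choose $\rho, \eta$ depending only on $\kappa$ so that $4\rho + 2\eta < \kappa/4$. If $\delta < 2\rho/L$, then for every $a \in \tilde A \subseteq \Psi_T(H)$ the ball $B(a,\delta)$ misses $\tilde A'$, so $B(a,\delta) \cap \Psi_T(H)^c \subseteq (\tilde A \cup \tilde A')^c$; but $\delta$‑semi‑mixedness of $\Psi_T(H)$ forces $m\bigl(B(a,\delta)\cap\Psi_T(H)^c\bigr) \ge \kappa\, m(B(a,\delta))$. Integrating this over $a \in \tilde A$ and using Fubini gives $\kappa\, m(\tilde A) \le m\bigl((\tilde A \cup \tilde A')^c\bigr) \le 4\rho + 2\eta$, which contradicts $m(\tilde A) \ge \tfrac14$ and the choice of $\rho, \eta$. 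Hence $\delta \ge 2\rho/L$; unravelling the definition of $L$ this rearranges to $\int_0^T \norm{\grad u(t)}_{L^p}\,dt \ge \abs{\ln\delta}/C_p'$ for all sufficiently small $\delta$, with $C_p'$ depending only on $d$, $p$ and $\kappa$, which is \eqref{eqnCrippaDeLellis}.

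The analytic core --- the maximal‑function inequality of the first ingredient and the measure‑preservation identity $\norm{(M\abs{\grad u(t)})(Y(t,\cdot))}_{L^p} = \norm{M\abs{\grad u(t)}}_{L^p}$ --- is robust, and I expect the genuine difficulty to lie in the geometric bookkeeping: on the torus $\partial H$ is a union of two hyperplanes, so $H$ and $H^c$ abut, one must work with the interior collars $H_\rho, H_\rho'$, and one must simultaneously track how the exceptional set $\T^d \setminus K$ propagates under $\Psi_T$ without swamping these collars. One should also check that the same ODE estimate can be run forward (to control $\Psi_T$ itself rather than $\Psi_T^{-1}$) should that be more convenient, and verify throughout that every constant produced depends only on $d$, $p$ and the fixed accuracy parameter $\kappa$, and not on $u$, $T$ or $\delta$.
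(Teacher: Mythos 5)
Your argument is essentially correct, and it is worth noting that the paper itself does not prove Theorem~\ref{thmCrippaDeLellis}: it quotes it from~\cite{bblCrippaDeLellis08}, importing only the Lipschitz-off-a-small-set estimate as Proposition~\ref{ppnLipConstant} (also stated without proof), and then proves the variant it actually needs, Lemma~\ref{lmaSemiMixedWorkDone}, which bounds the cost of \emph{semi}-mixing an arbitrary measurable set $A$ with the factor $m(A)^{1/p}$ and a scale $r_0(A)$, via a Lebesgue-density and Vitali-covering argument built on Proposition~\ref{ppnLipConstant}. Your route differs in two respects. First, you reprove the Lipschitz estimate itself through the two-point ODE bound, the Morrey-type inequality $\abs{f(a)-f(b)}\le C_d\abs{a-b}\,(M\abs{\grad f}(a)+M\abs{\grad f}(b))$, Minkowski's integral inequality, measure preservation, and the $L^p$ boundedness of the maximal function (exactly where $p>1$ enters, in line with the paper's discussion around \eqref{logineq11}); this is faithful to Crippa--De Lellis and makes the quoted theorem self-contained. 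Second, your geometric step exploits the special geometry of the half torus: the interior collars $H_\rho$, $H'_\rho$ at mutual distance $2\rho$ replace the density-point/covering machinery, and averaging the semi-mixedness inequality over $\tilde A$ by Fubini gives the contradiction cleanly. What your approach buys is a more elementary, self-contained proof of the cited theorem; what it does not give is the paper's Lemma~\ref{lmaSemiMixedWorkDone}, where $A$ is arbitrary (no flat boundary to take collars of), and the scale $r_0$ and prefactor $m(A)^{1/p}$ must be extracted by the covering argument. Two small points to tidy: the pointwise maximal-function inequality should be stated in its periodic (torus) form, and your final rearrangement $\ln(\rho/\delta)\gtrsim\abs{\ln\delta}$ indeed requires $\delta$ small compared to $\rho(\kappa)$, so the restriction ``for sufficiently small $\delta$'' you state (or an absorption of the remaining range into $C_p$) is genuinely needed; with $\kappa$ fixed as in Bressan's formulation this matches the intended statement.
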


  For our purposes we will need two extensions of Theorem~\ref{thmCrippaDeLellis}.
  First, we will need to start with sets other than the half torus.
  Second, we will need lower bounds for the work done to \emph{semi}-mix sets to small scales.
  Note that in order for a flow to $\delta$-mix a set $A$, it has to both $\delta$-semi-mix $A$ and $\delta$-semi-mix $A^c$.
  Generically each of these steps should cost comparable amounts, and hence a semi-mixed version of Theorem~\ref{thmCrippaDeLellis} should follow using techniques in~\cite{bblCrippaDeLellis08}.
  We state this as our next lemma.
  \begin{lemma}\label{lmaSemiMixedWorkDone}
    Let $\Psi$ be the flow map of an incompressible vector field $u$.
    Let $A \subset \T^d$ be any measurable set and let $p > 1$.
    There exist constants $r_0 = r_0(A)$ and $a = a(d, \kappa, p) > 0$, such that if for some $\delta < r_0 / 2$ and $T > 0$ the set $\Psi_T(A)$ is $\delta$-\emph{semi}-mixed, then
    \begin{equation}\label{eqnSemiMixed}
      \int_0^T \norm{\grad u(\cdot, t)}_{L^p} \, dt \geq \frac{ m(A)^{1/p} }{a} \abs[\Big]{\ln \frac{2 \delta}{r_0} }.
    \end{equation}
  \end{lemma}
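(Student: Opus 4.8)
The plan is to run the argument of~\cite{bblCrippaDeLellis08} behind Theorem~\ref{thmCrippaDeLellis}, but starting from a suitable ``thick core'' of $A$ instead of the half-torus and only asking for $\delta$-\emph{semi}-mixedness of the final configuration. Throughout, $\Psi = \Psi_t$ is the flow of $u$, $M$ is the Hardy--Littlewood maximal operator on $\T^d$, and $c_d$ is the volume of the unit ball. The estimate underlying~\cite{bblCrippaDeLellis08} is this: from the pointwise inequality $|u(t,a)-u(t,b)| \leq C_d|a-b|\paren{(M\grad u)(t,a)+(M\grad u)(t,b)}$ (valid since $u(t,\cdot)\in W^{1,1}$) and Gronwall's inequality applied to $t\mapsto|\Psi_t(x)-\Psi_t(y)|$, one gets for all $x,y$ and all $t\leq T$
\begin{equation*}
  \abs[\Big]{\log\frac{|\Psi_t(x)-\Psi_t(y)|}{|x-y|}} \leq C_d\paren{G(x)+G(y)}, \qquad G(z)\defeq\int_0^T (M\grad u)(s,\Psi_s(z))\,ds.
\end{equation*}
Since each $\Psi_s$ preserves $m$ and $M$ is bounded on $L^p(\T^d)$ for $p>1$, Minkowski's integral inequality gives $\norm{G}_{L^p(\T^d)}\leq C_p\int_0^T\norm{\grad u(s)}_{L^p}\,ds =: C_p W$. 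For the scale, I would take $r_0=r_0(A)$ to be a scale for which the erosion $A_{r_0}\defeq\set{x\in A: B(x,r_0)\subseteq A}$ still has $m(A_{r_0})\geq\tfrac12 m(A)$; such $r_0>0$ exists for any $A$ of positive measure by the Lebesgue density theorem, and this is the precise sense in which $r_0$ records how unmixed $A$ is. (For initial data as in~\eqref{eqnSolvePrime} one would instead take $\Omega=\bigcup_i B(x_i,r_1)$ in place of $A_{r_0}$ below, with only cosmetic changes.)

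The point I would exploit is that $\delta$-semi-mixedness of $\Psi_T(A)$ already forces a definite distortion of the flow seen from deep inside $A$: since $\Psi_T(A^c)=\Psi_T(A)^c$, semi-mixedness says $m\paren{\Psi_T(A^c)\cap B(z,\delta)}\geq\kappa\, m(B(z,\delta))$ for every $z$. Fix $x\in A_{r_0}$ and apply this with $z=\Psi_T(x)$: the ``pocket'' $P(x)\defeq\Psi_T^{-1}\paren{B(\Psi_T(x),\delta)\cap\Psi_T(A^c)}\subseteq A^c$ then has measure at least $\kappa c_d\delta^d$, while every $y\in P(x)$ has $|x-y|\geq r_0$ (as $\operatorname{dist}(x,A^c)\geq r_0$) and $|\Psi_T(x)-\Psi_T(y)|<\delta$. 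Plugging into the displayed bound, every $y\in P(x)$ satisfies $C_d(G(x)+G(y)) > \log(r_0/\delta) > \abs{\log(2\delta/r_0)} =: 2C_d\mu_0$. Hence, writing $E\defeq\set{z\in\T^d:G(z)\geq\mu_0}$, for each $x\in A_{r_0}$ either $x\in E$, or else $G(x)<\mu_0$ and the \emph{whole} pocket $P(x)$ lies in $E\cap A^c$.

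The step that I expect to need the most care is turning this alternative into the bound $m(E)\gtrsim m(A)$, since the pockets $P(x)$ overlap and $x\mapsto P(x)$ is far from measure preserving, so one cannot simply integrate the pointwise statement. The remedy is a disjointification: let $F\subseteq A_{r_0}\setminus E$ be a maximal subset whose images $\set{\Psi_T(x):x\in F}$ are $2\delta$-separated. Maximality gives $\Psi_T(A_{r_0}\setminus E)\subseteq\bigcup_{x\in F}B(\Psi_T(x),2\delta)$, hence $m(A_{r_0}\setminus E)=m\paren{\Psi_T(A_{r_0}\setminus E)}\leq (\#F)\,c_d(2\delta)^d$; on the other hand the balls $B(\Psi_T(x),\delta)$, $x\in F$, are pairwise disjoint, so the pockets $P(x)$, $x\in F$, are pairwise disjoint subsets of $E\cap A^c$, giving $m(E)\geq(\#F)\,\kappa c_d\delta^d$. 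Eliminating $\#F$ yields $m(E)\geq\tfrac{\kappa}{2^d}\paren{m(A_{r_0})-m(E)}$, i.e. $m(E)\geq\tfrac{\kappa}{2^d+\kappa}m(A_{r_0})\geq\tfrac{\kappa}{2(2^d+\kappa)}m(A)$.

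Finally I would combine this with Chebyshev's inequality in $L^p$: $m(E)\leq\mu_0^{-p}\norm{G}_{L^p}^p\leq(C_pW/\mu_0)^p$. Comparing the two estimates for $m(E)$ and recalling $\mu_0=\tfrac{1}{2C_d}\abs{\log(2\delta/r_0)}$ gives
\begin{equation*}
  \int_0^T\norm{\grad u(s)}_{L^p}\,ds = W \geq \frac{1}{a}\,m(A)^{1/p}\,\abs[\Big]{\log\frac{2\delta}{r_0}}, \qquad a=a(d,\kappa,p),
\end{equation*}
which is~\eqref{eqnSemiMixed}. Using the $L^p$ --- rather than $L^1$ --- form of Chebyshev together with $\norm{G}_{L^p}\leq C_pW$ is exactly what produces the exponent $1/p$ on $m(A)$, and every constant entering $a$ (namely $C_d$, $C_p$, $c_d$, and the maximal-function bound) depends only on $d$, $\kappa$, $p$, so $a$ does too.
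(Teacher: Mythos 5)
The analytic core of your argument is sound and in fact leaner than the paper's: the pointwise Gronwall bound along trajectories via the maximal function, the estimate $\norm{G}_{L^p}\leq C_p\int_0^T\norm{\grad u(s)}_{L^p}\,ds$, Chebyshev in $L^p$, and the disjointification through $2\delta$-separated images together replace the paper's use of the Lipschitz-off-a-small-set estimate (Proposition~\ref{ppnLipConstant}) and its two Vitali coverings. But there is a genuine gap at the very first step, the choice of $r_0$. You set $A_{r_0}\defeq\set{x\in A: B(x,r_0)\subseteq A}$ and claim that, by the Lebesgue density theorem, any $A$ of positive measure admits $r_0>0$ with $m(A_{r_0})\geq\tfrac12 m(A)$. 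That is false: the density theorem produces points where $A$ has density $1$, not interior points. A positive-measure compact set with empty interior (a fat Cantor set, or a product of such sets in $\T^d$) contains no ball at all, so $A_r=\emptyset$ for every $r>0$. Since the lemma is stated for an arbitrary measurable $A$ --- and in the application $A=A_\lambda$ is a super-level set of an $L^\infty$ function, which need not have any interior --- this is not a removable technicality.

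Moreover the gap propagates into your main dichotomy: you use that every $y$ in the pocket $P(x)\subseteq A^c$ satisfies $\abs{x-y}\geq r_0$, which is exactly the statement $B(x,r_0)\subseteq A$. If, as in the paper, one only knows that $A$ fills a ball $B(x_i,r_i)$ up to a small fraction $\eps$ (which is what the density theorem actually provides), then points of $A^c$ may lie arbitrarily close to $x$, and for a fixed $x$ the whole pocket --- of measure $\kappa$ times $m(B(x,\delta))$, possibly much smaller than $\eps\, m(B(x_i,r_i))$ --- could sit inside $B(x_i,r_i)\setminus A$, so the alternative ``$x\in E$ or $P(x)\subseteq E$'' fails pointwise. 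The repair is to control the bad set $B_i\setminus A$ globally across your separated family $F$: the near parts of the pockets are pairwise disjoint subsets of $B_i\cap A^c$, of total mass at most $\eps\, m(B_i)$, so choosing $\eps$ small relative to $\kappa 2^{-d}$ and the core fraction rescues the count. This, however, is precisely the $\eps$-bookkeeping the paper performs in~\eqref{eqnDisjBalls}--\eqref{eqnCovering3}, not a cosmetic change. With $r_0$ and the unmixedness hypothesis formulated as in the paper, your Gronwall/Chebyshev/separation scheme does go through and would give a nice self-contained alternative to invoking Proposition~\ref{ppnLipConstant}.
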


  Morally the constant $r_0$ above should be a length scale at which set $A$ is not semi-mixed.
  Our proof, however, uses a condition on $r_0$ which is slightly stronger than only requiring that $A$ is not semi-mixed to scale $r_0$.
  Namely, we will require ``most'' of $A$ to occupy ``most'' of the union of disjoint balls of radius at least $r_0$.
  Deferring the proof of Lemma~\ref{lmaSemiMixedWorkDone} to Section~\ref{sxnLemmaProofs}, we prove Theorem~\ref{thmMain}.

  \begin{proof}[Proof of Theorem~\ref{thmMain}]
    Replacing $\theta$ with $\theta / \norm{\theta}_{L^\infty}$, we may without loss of generality assume $\norm{\theta_0}_{L^\infty} = 1$.
    Fix $0< \lambda \leq 1,$ and
     $\kappa \in (0, \frac{\lambda}{1 + \lambda})$. Let $a$ be the constant from Lemma~\ref{lmaSemiMixedWorkDone}, and $c_0$ the constant from Lemma~\ref{lmaMixNorm} with $n = 1$.
    Choose
    $$
      \delta = \paren[\Big]{ c_0 \norm{\theta(t)}_{H^{-1} } }^{\frac{2}{d+2}}.
    $$
    Then certainly $\norm{\theta(t)}_{H^{-1}} \leq \delta^{d/2+1}/ c_0$ and by Lemma~\ref{lmaMixNorm} the super level set $\{ \theta(t) > \lambda \}$ is $\delta$-semi-mixed.

    Now, since $\theta$ satisfies the transport equation~\eqref{eq:1}, we know
    $
      \set{ \theta(t) > \lambda } = \Psi_t(A_\lambda)
    $,
    where $\Psi$ is the flow of the vector field $u$.
    Thus, Lemma~\ref{lmaSemiMixedWorkDone} now implies
    $$
      \delta \geq \frac{r_0}{2} \exp \paren[\Big]{ \frac{-a}{m(A_\lambda)^{1/p} } \int_0^t \norm{\grad u}_{L^p} }.
    $$
    Consequently
    $$
      \norm{\theta(t)}_{H^{-1}}
	= \frac{\delta^{d/2+1} }{c_0}
	\geq
	  { \frac{r_0^{d/2+1}}{c_0 2^{d/2+1}} } \exp \paren[\Big]{ \frac{-d a}{m(A_\lambda)^{1/p} } \int_0^t \norm{\grad u}_{L^p} },
    $$
    finishing the proof.
  \end{proof}

  \section{Proofs of Lemmas.}\label{sxnLemmaProofs}

  We devote this section to the proofs of Lemmas~\ref{lmaMixNorm} and~\ref{lmaSemiMixedWorkDone}.

  \begin{proof}[Proof of Lemma~\ref{lmaMixNorm}]
    Suppose for the sake of contradiction that $A_\lambda$ is not $\delta$-semi-mixed.
    Then by definition, there exists $x \in \T^d$ such that
    \begin{equation}\label{eqnDual1}
      m(A_\lambda \cap B(x,\delta))
	\geq (1-\kappa) m(B(x,\delta))
	=(1-\kappa)\pi(d) \delta^d.
    \end{equation}
    Here $\pi(d)$ is the volume of $d$-dimensional unit ball.

    By duality
    \begin{equation}\label{eqnDual2}
      \norm{\theta}_{H^{-n}}
	= \sup_{f\in H^{n}} \frac{1}{\norm{f}_{H^{n}} } \abs[\Big]{\int_{\mathbb{T}^d} \theta(x)f(x) \, dx }.
    \end{equation}
    We choose $f \in H^{n}$ to be a function which is identically equal to $1$ in $B(x,\delta)$, and which vanishes outside $B(x,(1+\eps)\delta)$ for some small $\eps > 0$.
    A direct calculation shows that we can arrange
    $$
      \norm{f}_{H^{n}}\leq c_1(d)\cdot \eps^{-n + \frac{1}{2}}\cdot \delta^{-n + \frac{d}{2}},
    $$
    for some (explicit) constant $c_1$ depending only on the dimension.

    On the other hand using~\eqref{eqnDual1} gives
    \begin{equation}\label{eq:kap}
      \int_{\mathbb{T}^d}\theta(x) f(x) dx
	\geq \pi(d) \norm{\theta}_{L^\infty} \delta^d \paren[\big]{ (1-\kappa) \lambda - \kappa - c_2(d)\eps},
    \end{equation}
    for some (explicit) dimensional constant $c_2(d)$.
    Choosing
    $
      \eps = \frac{\lambda - (1+\lambda)\kappa}{2c_2(d)}
    $
    and using~\eqref{eqnDual2} we obtain
    $$
      \norm{\theta}_{H^{-n}}\geq \frac{\norm{\theta}_{L^\infty} \delta^{-n + \frac{d}{2}} }{c_0(d, \kappa, \lambda, n)}
    $$
    as desired.
  \end{proof}
  \begin{remark*}
    Observe $c_0 = c_0'(d, n) (\lambda - (1 + \lambda) \kappa )^{n - \frac{1}{2}}$. 
  \end{remark*}

  Now we turn to Lemma~\ref{lmaSemiMixedWorkDone}.
  For this, we need a result from~\cite{bblCrippaDeLellis08} which controls the Lipshitz constant of the Lagrangian map except on a set of small measure.
  \begin{proposition}[Crippa DeLellis '08~\cite{bblCrippaDeLellis08}]\label{ppnLipConstant}
    Let $\Psi(t,x)$ be the flow map of the (incompressible) vector field $u$.
    For every $p > 1$, $\eta>0$, there exists a set $E \subset \T^d$ and a constant $c = c(d, p)$ such that $m(E^c)\leq \eta$ and for any $t \geq 0$
    we have
    \begin{equation}\label{eqnLipConstant}
      \Lip(\Psi^{-1}(t,\cdot)|_{E^c})
	\leq \exp \paren[\Big]{ \frac{c}{\eta^{\frac{1}{p}} } \int_0^t \norm{\grad u(s)}_{L^p} \, ds }.
    \end{equation}
    Here
    $$
      \Lip(\Psi^{-1}(t,\cdot)|_{E^c})
	  \defeq \sup_{\substack{x, y \in E^c\\x \neq y}} \frac{\abs{\Psi\inv(t, x) - \Psi\inv(t, y)}}{\abs{x - y}}
    $$
    is the Lipshitz constant of $\Psi\inv$ on $E^c$.
  \end{proposition}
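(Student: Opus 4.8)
The plan is to prove the bound for the forward flow $\Psi$ and then deduce the statement for $\Psi\inv(t,\cdot)$ by time reversal: for fixed $t$, the map $\Psi\inv(t,\cdot)$ is exactly the time-$t$ flow of the reversed field $\tilde u(\tau,\cdot) \defeq -u(t-\tau,\cdot)$, which is again smooth and incompressible and satisfies $\int_0^t \norm{\grad \tilde u(\tau)}_{L^p}\,d\tau = \int_0^t \norm{\grad u(s)}_{L^p}\,ds$. Thus it suffices to bound the Lipschitz constant of $\Psi(t,\cdot)$ off a small set, and the heart of the matter is a two-point Gronwall estimate measuring how fast a pair of trajectories can separate.

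Writing $g_s \defeq \abs{\grad u(s,\cdot)}$ and letting $M$ denote the Hardy--Littlewood maximal operator, I would start from the standard pointwise Sobolev inequality
\[
  \abs{u(s,z) - u(s,w)} \le C_d \abs{z - w} \paren[\big]{ Mg_s(z) + Mg_s(w) },
\]
valid for a.e.\ $z, w \in \T^d$. This is the only input using the regularity of $u$, and it is here that the hypothesis $p > 1$ becomes essential, since $M$ is bounded on $L^p$ precisely for $p > 1$. First I would fix $x \neq y$, set $w(s) \defeq \abs{\Psi(s,x) - \Psi(s,y)}$, and differentiate $\log w$. Since $\dot w(s) \le \abs{u(s,\Psi(s,x)) - u(s,\Psi(s,y))}$, the pointwise inequality gives $\frac{d}{ds}\log w(s) \le C_d\paren[\big]{ Mg_s(\Psi(s,x)) + Mg_s(\Psi(s,y)) }$, and integrating in time yields
\[
  \log \frac{\abs{\Psi(t,x) - \Psi(t,y)}}{\abs{x - y}} \le \Phi(x) + \Phi(y),
  \qquad
  \Phi(x) \defeq C_d \int_0^t Mg_s(\Psi(s,x))\,ds.
\]
Next I would bound $\Phi$ in $L^p(\T^d)$. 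By Minkowski's integral inequality, the measure-preserving change of variables $z = \Psi(s,x)$ (available because $u$ is incompressible), and the maximal function theorem,
\[
  \norm{\Phi}_{L^p} \le C_d \int_0^t \norm{Mg_s(\Psi(s,\cdot))}_{L^p}\,ds = C_d \int_0^t \norm{Mg_s}_{L^p}\,ds \le c' \int_0^t \norm{\grad u(s)}_{L^p}\,ds,
\]
with $c' = c'(d,p)$; the unit volume of $\T^d$ means no further factor appears. Setting $B \defeq \int_0^t \norm{\grad u(s)}_{L^p}\,ds$ and $K \defeq c' B \eta^{-1/p}$, Chebyshev's inequality in $L^p$ gives $m(\set{\Phi > K}) \le \norm{\Phi}_{L^p}^p / K^p \le \eta$. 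Taking the exceptional set to be $\set{\Phi > K}$ (of measure at most $\eta$) and its complement to be the good set, the two displays combine to give $\abs{\Psi(t,x) - \Psi(t,y)} \le e^{2K}\abs{x-y}$ for a.e.\ pair $x,y$ in the good set; this is exactly~\eqref{eqnLipConstant} with $c = 2c'$, after the reversal described above.

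The step I expect to be the main obstacle is making the two-point estimate rigorous: the pointwise Sobolev inequality holds only for \emph{a.e.}\ pair $(z,w)$, whereas I evaluate it along the trajectories $(\Psi(s,x),\Psi(s,y))$, which for fixed $(x,y)$ trace out a measure-zero curve. The fix is a Fubini argument: the map $(x,y) \mapsto (\Psi(s,x),\Psi(s,y))$ is measure preserving on $\T^d \times \T^d$, so the space-time set on which the pair lands in the exceptional null set of the pointwise inequality has measure zero; hence for a.e.\ $(x,y)$ the inequality holds for a.e.\ $s$, and the time integral is unaffected. Continuity of the smooth flow then upgrades the a.e.\ bound on the good set to a genuine Lipschitz bound on its closure. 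The remaining work is bookkeeping: tracking the dimensional constants through Minkowski's inequality and the maximal function theorem, and verifying that the claimed $\eta^{-1/p}$ scaling emerges from the $L^p$ Chebyshev bound rather than a cruder $L^1$ one.
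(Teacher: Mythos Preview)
Your proposal is correct and is precisely the Crippa--DeLellis argument the paper has in mind. Note that the paper does not actually prove this proposition: it is quoted as a result from~\cite{bblCrippaDeLellis08}, and the surrounding text only offers the motivating observation~\eqref{logineq11} together with the remark that ``the more sophisticated maximal form of this inequality'' is what yields the proposition. Your sketch supplies exactly that maximal form---the pointwise Haj\l asz-type inequality $\abs{u(z)-u(w)} \le C_d\abs{z-w}(Mg(z)+Mg(w))$, followed by the $L^p$ boundedness of $M$, measure-preservation, and Chebyshev---so it aligns with what the paper sketches and defers to~\cite{bblCrippaDeLellis08}.

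Two small remarks. First, the statement as printed has $m(E^c)\le\eta$, but the proof of Lemma~\ref{lmaSemiMixedWorkDone} uses it with $m(E)\le\eta$ and the Lipschitz bound on $E^c$; your construction (exceptional set $\{\Phi>K\}$ small, Lipschitz on its complement) matches the intended usage. Second, your exceptional set depends on the terminal time $t$, whereas the literal quantifier order in the statement (``there exists $E$ \dots\ for any $t\ge0$'') suggests a single $E$. This is harmless here: the only application is in Lemma~\ref{lmaSemiMixedWorkDone}, where $T$ is fixed before Proposition~\ref{ppnLipConstant} is invoked, so a $T$-dependent $E$ suffices.
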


  The proof of Proposition~\ref{ppnLipConstant} is built
  upon the simple observation~\cite{bblAmbrosioLecumberryStefania05} that for a passive scalar $\theta(x,t)$ and smooth advecting velocity $u$ one has the inequality
  \begin{equation}\label{logineq11}
    \int \log_+ \abs[\big]{ \nabla \theta\paren[\big]{ t, \Psi(t,x)} } \,dx
      \leq \int_0^t \int \abs[\big]{ \nabla u\paren[\big]{ t, \Psi(t,x) } }\,dx.
  \end{equation}
  This can be proved by an elementary calculation.
  In fact, even the point wise bound
  \[ D \log |\nabla \theta| \leq |\nabla u| \]
  is true, where $D = \delt + u \cdot \grad$ is the material derivative.
  In the form \eqref{logineq11}, this inequality is not very useful.
  But it turns out that the more sophisticated maximal form of this inequality \cite{bblAmbrosioLecumberryStefania05,bblCrippaDeLellis08} can be much more useful and is essentially what leads to Proposition~\ref{ppnLipConstant}.
  We refer the reader to~\cite{bblCrippaDeLellis08} for the details of the proof.

  We use Proposition~\ref{ppnLipConstant} to prove Lemma~\ref{lmaSemiMixedWorkDone} below.

  \begin{proof}[Proof of Lemma~\ref{lmaSemiMixedWorkDone}.]
    The main idea behind the proof is as follows:
    Suppose first $r_0$ is some large scale at which the set $A$ is ``not semi-mixed''.
    Let $T > 0$ be fixed and suppose $\Psi_T(A)$ is $\delta$-semi-mixed for some $\delta < r_0 / 2$.
    Since $\Psi_T(A)$ is $\delta$-semi-mixed, there should be many points $\tilde x \in \Psi_T(A)$ and $\tilde y \in \Psi_T(A)^c$ such that $\abs{\tilde x - \tilde y} < \delta$.
    Since $A$ is ``not semi-mixed'' to scale $r_0$, there should be many points $\tilde x$ and $\tilde y$ so that we additionally have $\abs{\Psi_T\inv(\tilde x) - \Psi_T\inv(\tilde y)} \geq r_0 / 2$.
    This will force the Lipshitz constant of $\Psi_T\inv$ to be at least $r_0 / (2 \delta)$ on a set of large measure.
    Combined with Proposition~\ref{ppnLipConstant} this will give the desired lower bound on $\int_0^t \norm{\grad u}_{L^p}$.

    We now carry out the details of the above outline.
    The first step in the proof is to choose the length scale $r_0$.
    Let $\eps = \eps( \kappa, d )$ be a small constant to be chosen later.
    We claim that there exists a natural number $l$ and finitely many disjoint balls $B(x_1, r_1)$, \dots, $B(x_l, r_l)$ such that
    \begin{equation}\label{eqnDisjBalls}
      m\paren[\Big]{ \bigcup_{i=1}^l B(x_i, r_i) } \geq \frac{m(A)}{2 \cdot 3^d}
      \quad\text{and}\quad
      \frac{ m( A \cap B(x_j, r_j) ) }{m(B(x_j, r_j))} > 1 - \eps
    \end{equation}
    for every $j \in \{1, \dots l\}$.

    To see this, note that the metric density of $A$ is $1$ almost surely in $A$.
    Thus, removing a set of measure $0$ from $A$ if necessary, we know that for every $x \in A$ there exists an $r \in (0, 1]$ such that
    $$
      \frac{ m(A \cap B(x, r) ) }{m(B(x, r))} > 1 - \eps.
    $$
    Now choose $K \subset A$ compact with $m(K) > m(A) / 2$.
    Since the above collection of balls is certainly a cover of $K$, we pass to a finite sub-cover.
    Applying Vitali's lemma to this sub-cover we obtain a disjoint sub-family $\{B(x_i, r_i) \mid i = 1, \dots, l \}$ with $m(\cup B(x_i, r_i) ) \geq m(K) / 3^d$.
    This immediately implies~\eqref{eqnDisjBalls}.
    For convenience let $B_i = B(x_i, r_i)$, and choose $r_0 = \min\{r_1, \dots, r_l\}$.

    Now let $\eta > 0$ be another small parameter that will be chosen later.
    By Proposition~\ref{ppnLipConstant} we know that there exists a set $E$ with $m(E)\leq\eta$ such that the inequality~\eqref{eqnLipConstant} holds.
    Define the set
    \begin{equation}
      F = \set[\big]{ x \in \T^d \;\big|\;  \frac{ m( B( x ,\delta) \cap E)}{m( B(x,\delta))} > \frac{\kappa}{2} }
    \end{equation}
    Clearly $F \subset \{ M \Chi*E > \kappa / 2 \}$, where $M \Chi*E$ is the maximal function of $\Chi*E$.
    Consequently,
    $$
      m( F )
	\leq m\paren[\big]{  \{ M \Chi*E > \frac{\kappa }{2} \} }
	\leq \frac{2 c_1}{\kappa} m(E)
    $$
    for some explicit constant $c_1 = c_1(d)$.
    (It is well known that $c_1 = 3^d$ will suffice.)

    Since $\Psi_T$ is measure preserving we know $m(\Psi_T\inv (E \cup F)) \leq (1 + 2 c_1 / \kappa) \eta$.
    Thus choosing
    $$
      \eta = \frac{\kappa}{\kappa + 2 c_1} \paren[\Big]{ \frac{1}{4^d} - \eps } \sum_{i = 1}^l m\paren[\big]{ B_i }
    $$
    will guarantee
    $$
      m( \Psi_T\inv( E \cup F) )
	\leq \paren[\Big]{ \frac{1}{4^d} - \eps } \sum_{i = 1}^l m\paren[\big]{ B_i }.
    $$
    This implies that for some $i_0 \leq l$ we must have
    \begin{equation}\label{eqnCovering1}
      m( (B_{i_0}  \cap A) - \Psi_T\inv(E \cup F) )
	\geq \paren[\Big]{ 1 - \frac{1}{4^d} } m\paren[\big]{ B_{i_0} }.
    \end{equation}
    By reordering, we may without loss of generality assume that $i_0 = 1$.
    Consequently, for
    $$
      C_1 = \set[\Big]{ x \in (B_1 \cap A) - \Psi_T\inv( E \cup F) \;\Big|\; d( x, B_1^c ) > \frac{r_1}{2}}.
    $$
    equation~\eqref{eqnCovering1} implies
    $$
      m( C_1 ) \geq \paren[\Big]{ \frac{1}{2^d} - \frac{1}{4^d} } m( B_1 ).
    $$

    Now, from the collection of open balls $\{ B( \tilde x, \delta ) \mid \tilde x \in \Psi_T(C_1) \}$ the Vitali covering lemma allows us to extract a finite disjoint collection $B( \tilde x_1, \delta )$, \dots, $B( \tilde x_n, \delta )$ such that
    $$
      m\paren[\Big]{  \bigcup_1^n B( \tilde x_i, \delta ) } \geq \frac{m( C_1 )}{5^d}.
    $$
    Our goal is to find $\tilde y$ such that $\tilde y \in B( \tilde x_i, \delta ) - E$ for some $i$, and $\abs{\Psi_T\inv \tilde y - \Psi_T\inv x} > r_1 / 2$.

    For convenience set $\tilde B_i = B( \tilde x_i, \delta )$.
    Since $\Psi_T(A)$ is $\delta$-semi-mixed and $\tilde x_i \not\in F$ we have
    \begin{equation}\label{eqnCovering2}
      m( \Psi_T(A) \cap \tilde B_i ) \leq (1-\kappa) m( \tilde B_i )
      \quad\text{and}\quad
      m( E \cap \tilde B_i ) \leq \frac{\kappa}{2} m( \tilde B_i ).
    \end{equation}
    Also, since $\Psi_T$ is measure preserving and by the definition of $B_1$ we see
    \begin{equation}\label{eqnCovering3}
      m\paren[\Big]{ \bigcup_{i=1}^n \tilde B_i \cap \Psi_T\paren[\big]{ B_1 - A } }
	\leq m( B_1 - A)
	< \eps m(B_1)
    \end{equation}

    Using the fact that $\{ \tilde B_i \}$ are all disjoint, summing~\eqref{eqnCovering2} and using~\eqref{eqnCovering3} gives
    \begin{multline*}
      m( \bigcup_{i=1}^n \tilde B_i \cap E \cap \Psi_T (B_1) )
	< \paren[\big]{ 1 - \frac{\kappa}{2} } \sum_{i = 1}^n m( \tilde B_i ) + \eps m(B_1)\\
	\leq
	  \paren[\Big]{ 1 - \frac{\kappa}{2} + \eps 5^d \paren[\Big]{ \frac{1}{2^d} - \frac{1}{4^d} }\inv } \sum_{i = 1}^n m( \tilde B_i ).
    \end{multline*}
    Thus choosing
    $$
      \eps < \frac{\kappa}{2 \cdot 5^d} \paren[\Big]{ \frac{1}{2^d} - \frac{1}{4^d} }
    $$
    will guarantee
    $$
      m\paren[\big]{ \bigcup_{i=1}^n \tilde B_i \cap E \cap \Psi_T (B_1) }
      < m\paren[\big]{ \bigcup_{i=1}^n \tilde B_i }
    $$
    This in turn will guarantee that for some $i$ we can find $\tilde y \in \tilde B_{i} - E - \Psi_T(B_1)$.

    Now observe that
    $$
      \tilde y, \tilde x_i \not\in E,
      \qquad
      \abs{\tilde y - \tilde x_i} < \delta,
      \qquad\text{and}\qquad
      \abs{\Psi_T\inv( \tilde y ) - \Psi_T\inv (\tilde x_i ) } > \frac{r_1}{2}.
    $$
    The last inequality above follows because $\Psi_T\inv(\tilde x_i) \in C_1$ and $\Psi_T\inv( \tilde y ) \not\in B_1$.
    This forces
    $$
      \Lip( \Psi_T\inv |_{E^c} )
	\geq \frac{ \abs{\Psi_T\inv(\tilde y) - \Psi_T\inv(\tilde x_i)} }{\abs{\tilde y - \tilde x_i}}
	> \frac{r_1}{2 \delta} \geq \frac{r_0}{2 \delta}.
    $$
    Now using~\eqref{eqnLipConstant}, and letting $a = a(d, \kappa, p)$ denote a constant that changes from line to line we obtain
    \begin{equation}\label{eqnLipBd1}
      \int_0^T \norm{\grad u(t)}_{L^p} \, dt
	\geq \frac{\eta^{\frac{1}{p}} }{a} \abs[\big]{ \log \paren[\big]{\frac{r_0}{2\delta}} }.
    \end{equation}
    Observe finally that
    $$
      \eta =
	c_2 m\paren[\big]{ \bigcup_{i =1}^l B_i }
	\geq \frac{c_2 m(A)}{2 \cdot 3^d}
    $$
    for some explicit constant $c_2 = c_2( d, \kappa )$.
    Consequently~\eqref{eqnLipBd1} reduces to
    $$
      \int_0^T \norm{\grad u(t)}_{L^p} \, dt
	\geq \frac{m(A)^{\frac{1}{p}} }{a} \abs[\big]{ \log \paren[\big]{\frac{r_0}{2\delta}} },
    $$
    as desired.
  \end{proof}

  \section{Numerical results}\label{sxnNumerics}

  In this section we present numerical results illustrating how the exponential decay rate varies with the initial data.
  For numerical purposes we work on the $1$-periodic torus.
  Given a parameter $a$, we define the initial data $\theta_0 = \theta_0' / \norm{\theta_0'}_{L^2}$ where
  $$
    \theta_0'(x, y) = \begin{dcases}
      \sin\paren[\big]{ \frac{2 \pi x}{a} } \sin\paren[\big]{ \frac{2 \pi (y + \frac{a}{8}) }{a} }
	& \text{for } 0 < x < \frac{a}{2} ~\text{and}~ \frac{-a}{8} < y < \frac{a}{2} - \frac{a}{8}
      \\
      \sin\paren[\big]{ \frac{2 \pi x}{a} } \sin\paren[\big]{ \frac{2 \pi (y - \frac{a}{8}) }{a} }
	& \text{for } \frac{a}{2} < x < a ~\text{and}~ \frac{a}{8} < y < \frac{a}{2} + \frac{a}{8}\\
	0 & \text{otherwise}.
    \end{dcases}
  $$
  A figure of this is shown in~\ref{fgrSolPlots}(a).

We do not know which velocity field achieves the lower bound~\eqref{eq:solve}. However the steepest descent method introduced in~\cite{bblLinThiffeaultDoering11} provides us with a reasonable candidate.
  Explicitly, their formula gives
  \begin{equation}\label{eqnVelocity}
      u=\frac{-\lap^{-1}P(\theta\nabla^{-1}\theta)}{\norm{\nabla^{-1}P(\theta\nabla^{-1}\theta)}_{L^2}},
  \end{equation}
  where $P$ is the Leray-Hodge projection onto divergence free vector fields.
  This can be derived by multiplying both sides of~\eqref{eq:1} by ${\lap}^{-1}\theta$ and integrating by parts.

  Using a pseudo-spectral method%
  \footnote{The code and more figures can be downloaded from~\cite{bblWebsite}.}
  retaining $768$ Fourier modes in each variable we perform a numerical simulation of~\eqref{eq:1} with the initial data obtained by varying the parameter $a$ over the set $\{ 6/12, 7/12, \dots, 11/12\}$, and the velocity obtained dynamically using~\eqref{eqnVelocity}.
  Plots of our solutions at various times (for $a = 11/12$) are shown in Figure~\ref{fgrSolPlots}.
  \begin{figure}[htb]
    \centering
    \subfigure[$t=0$]{
      \includegraphics[width=.27\linewidth]{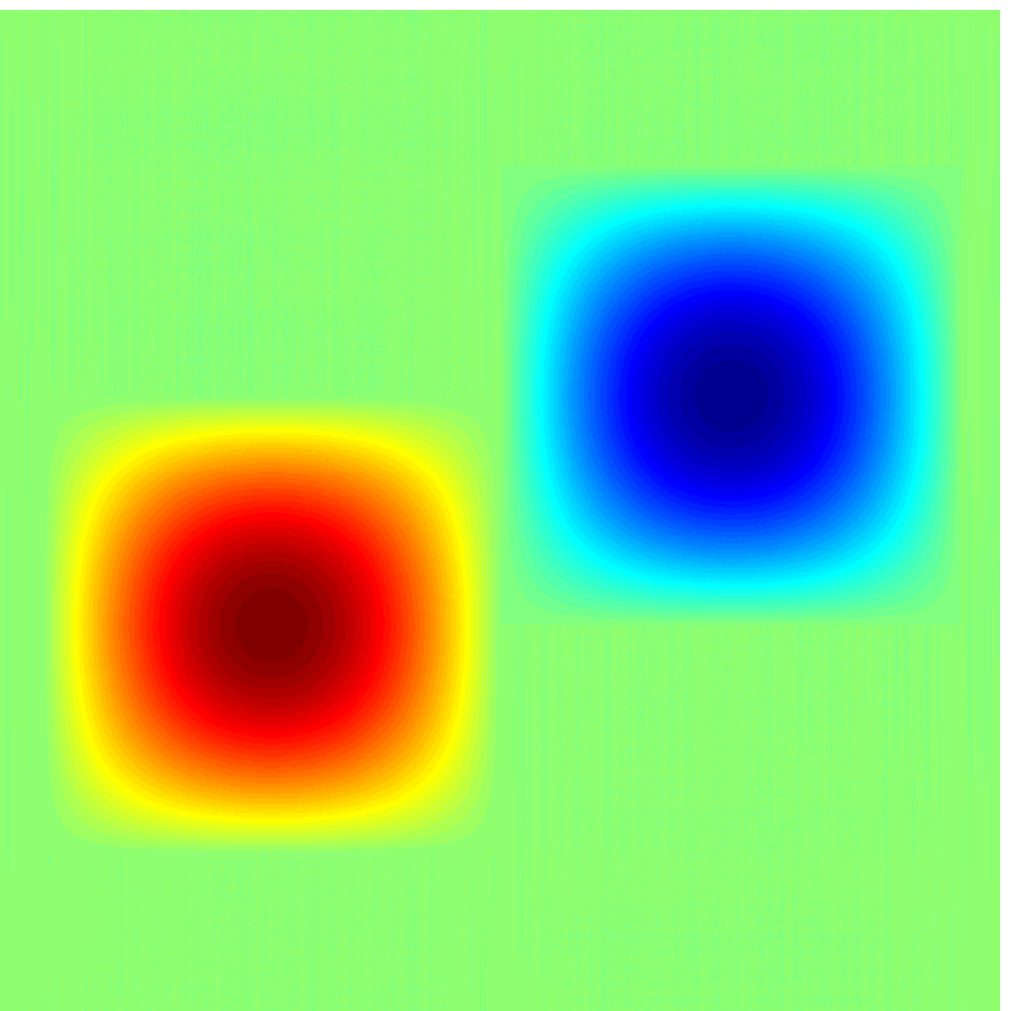}
    }\quad
    \subfigure[$t=1$]{
      \includegraphics[width=.27\linewidth]{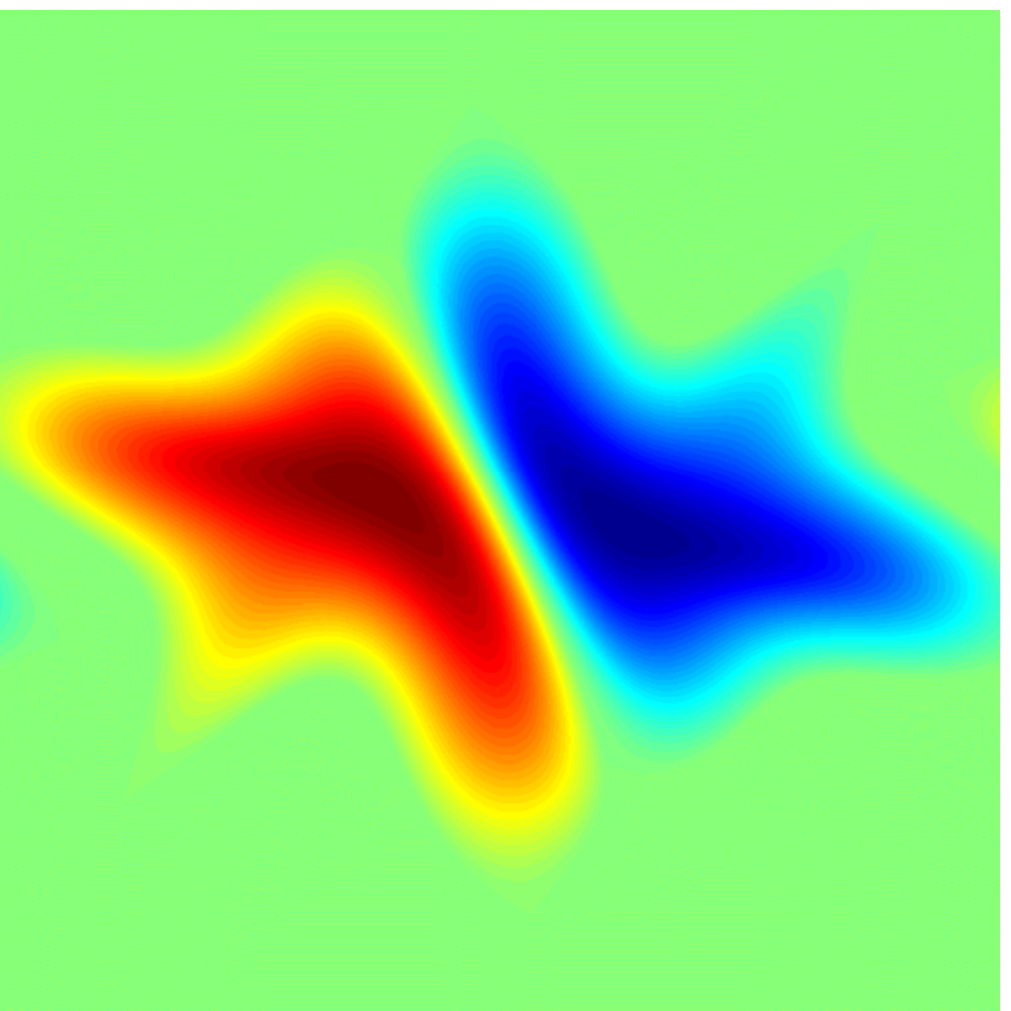}
    }\quad
    \subfigure[$t=2.05$]{
      \includegraphics[width=.27\linewidth]{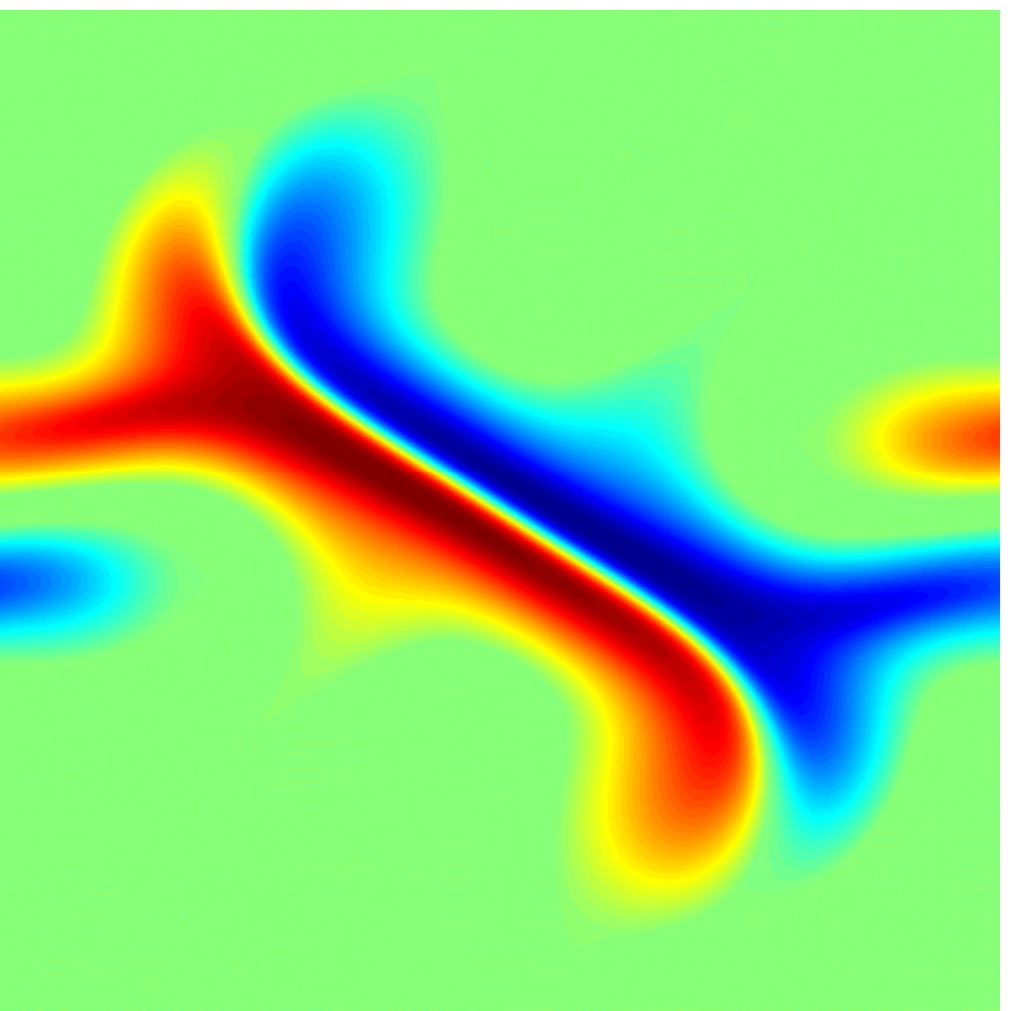}
    }\\
    \subfigure[$t=3.1$]{
      \includegraphics[width=.27\linewidth]{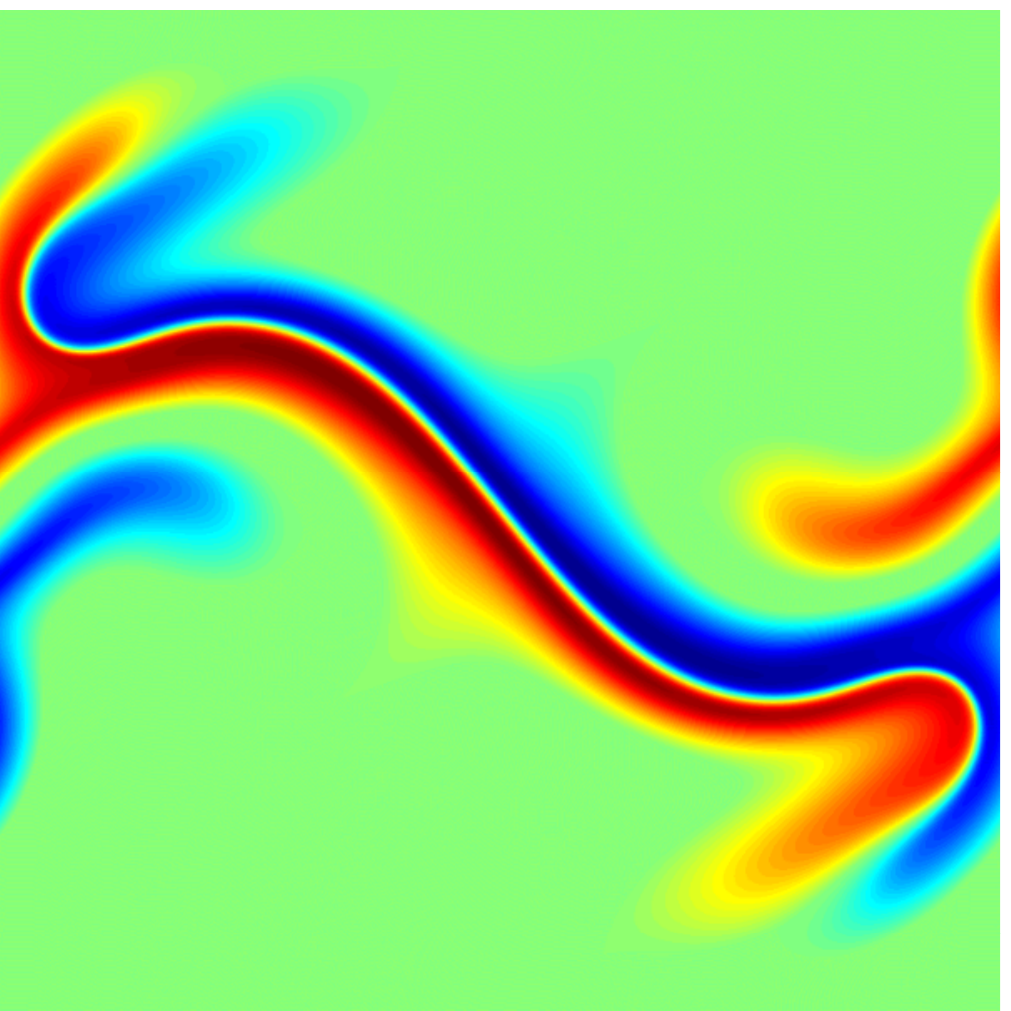}
    }\quad
    \subfigure[$t=4.15$]{
      \includegraphics[width=.27\linewidth]{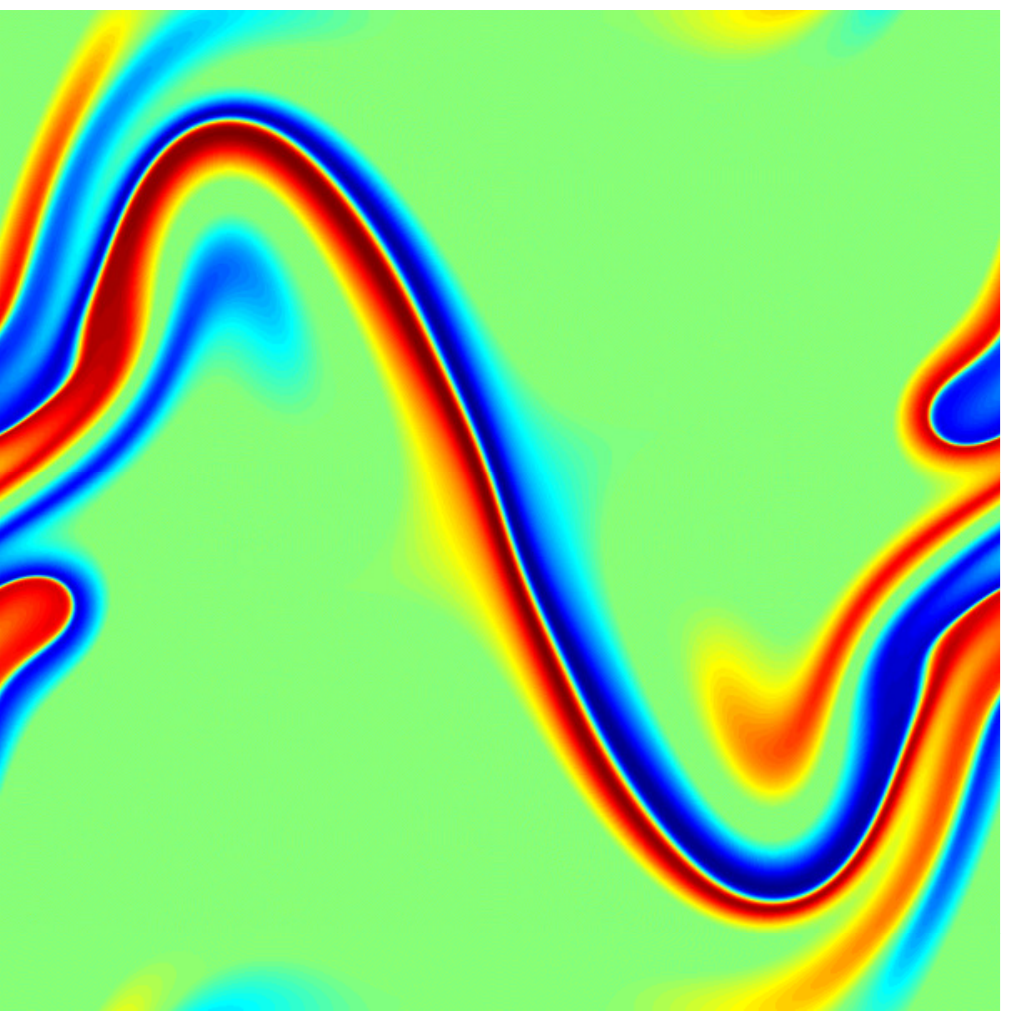}
    }\quad
    \subfigure[$t=5.19$]{
      \includegraphics[width=.27\linewidth]{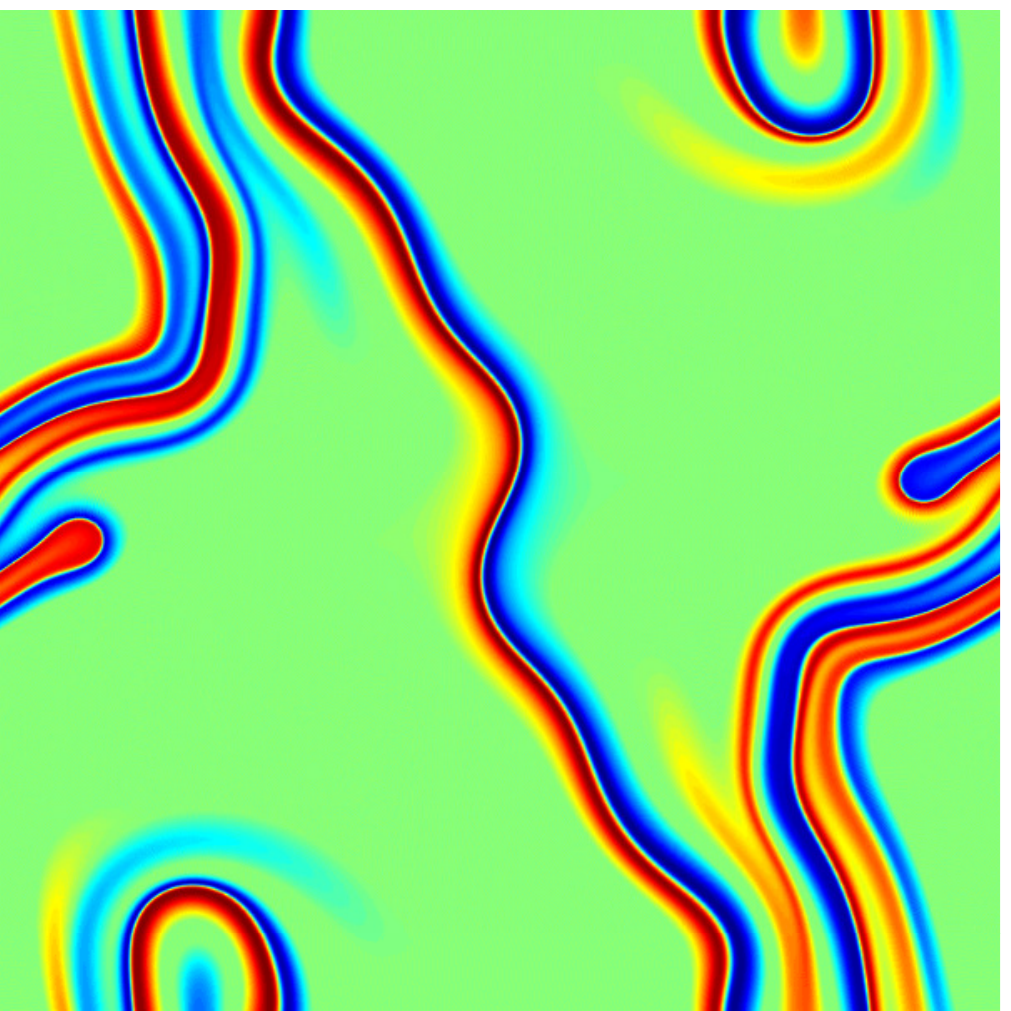}
    }
    \caption{Solution plots at various times for $a=11/12$.}
    \label{fgrSolPlots}
  \end{figure}

  Figure~\ref{fgrNumerics}(a) shows graphs of $\norm{\theta(t)}_{H^{-1}}$ vs $t$ as the parameter $a$ varies over the set $\{6/12, \dots, 11/12\}$.
  Figure~\ref{fgrNumerics}(b) shows graphs of $\ln \norm{\theta(t)}_{H^{-1}}$ vs $t$ for the same values of $a$.
  Following a short initial ``settling down'' period, the log plots in Figure~\ref{fgrNumerics}(b) are essentially linear indicating a exponential in time decay of $\norm{\theta_0}_{H^{-1}}$.
  \begin{figure}[htb]
    \centering
    \subfigure[$\norm{\theta(t)}_{H^{-1}}$ vs $t$]{
      \includegraphics[width=.27\linewidth]{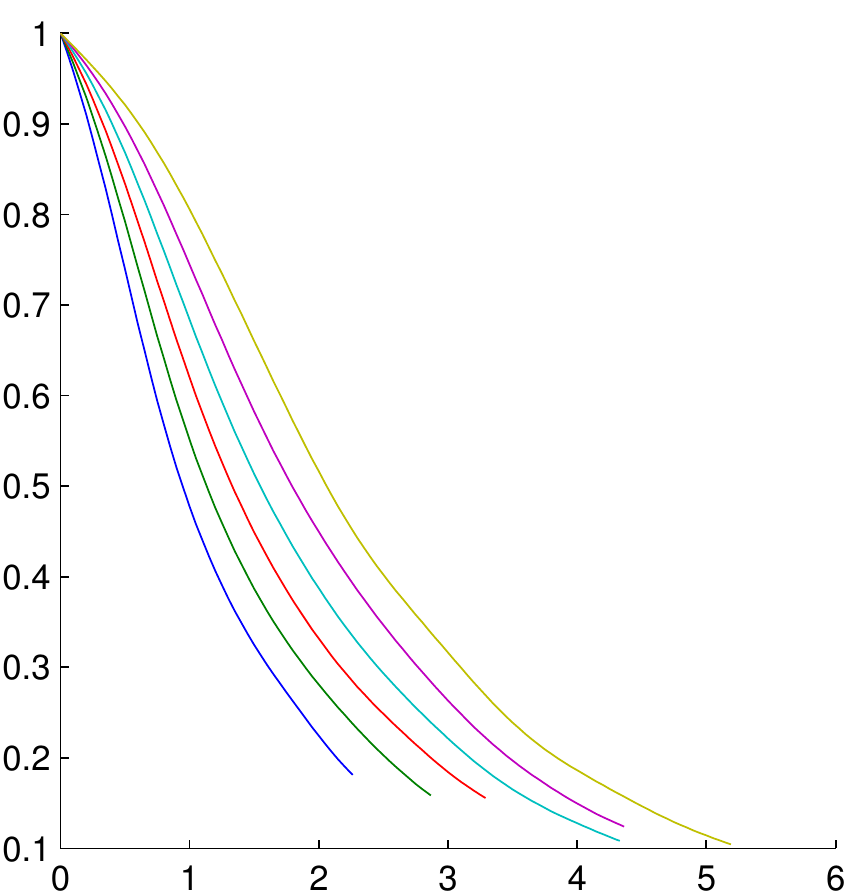}
    }\quad
    \subfigure[$\ln \norm{\theta(t)}_{H^{-1}}$ vs $t$]{
      \includegraphics[width=.27\linewidth]{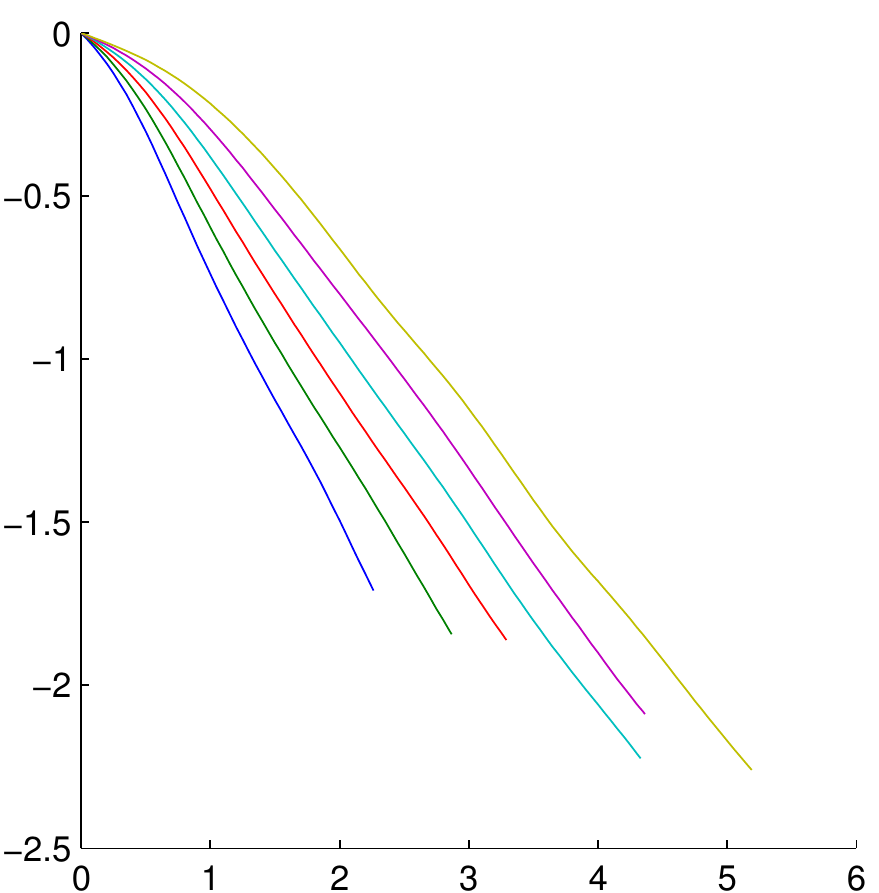}
    }\quad
    \subfigure[Exponential decay rate.]{
      \includegraphics[width=.27\linewidth]{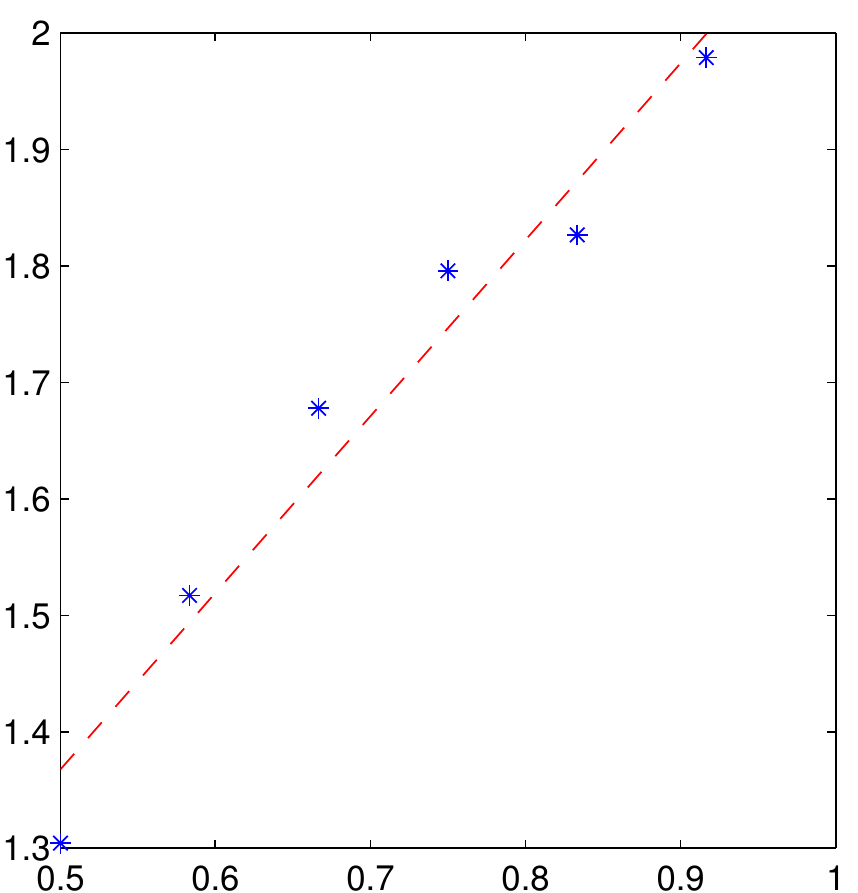}
    }
    \caption{The mix norm of the scalar density (Figures (a) \& (b)), and the negative reciprocal of the exponential decay rate vs $a$ as $a$ varies over $\{6/12, \dots, 11/12\}$ (Figure (c)).}
    \label{fgrNumerics}
  \end{figure}

  We fit each of the log plots in Figure~\ref{fgrNumerics}(b) to a straight line, and plot the negative reciprocal of the slope vs $a$ in Figure~\ref{fgrNumerics}(c).
  Since $m( \supp(\theta_0) ) = O( a^2 )$, Theorem~\ref{thmMain} predicts this graph to be linear as a function of $a$.
  This is in good agreement with the observed numerics.
  \section{A Scaling Argument and Universal Decay Rates.}\label{sxnScaling}

  Physical intuition suggests that the exponential decay rate in~\eqref{eq:solve} should have some dependence on the size of support of $\theta_0$. 
  As we discussed in the introduction, the mixing process can spread around the compactly supported initial data. Whether this has to happen in the mixing process, and whether this
  leads to slowdown in the mixing rate are very interesting open questions. In this section we show that exponential in time lower bound on the decay of the mix norm with the rate in the exponential independent of the initial data would have interesting consequences for mixing in domains with no slip boundaries.

  \begin{proposition}\label{boundaries}
    Let $I = (0, \ell)^d$ be a cube in $\mathbb R^d$.
    Suppose that there exist $k\in \mathbb R$, $q \in [1, \infty]$ and $c_0 > 0$ 
    such that
    \begin{equation}\label{eqnULowerBd}
      \norm{\theta(t)}_{H^{-1}}
	\geq 
	B(\theta_0)  \exp\paren[\Big]{ \frac{-c_0}{\ell^{d/p}} \int_0^t \norm{\grad u}_{L^p} }
    \end{equation}
    for some $p \in [1, d]$, all incompressible $u$ which vanish on $\del I$, and all initial data $\theta_0 \in C_c^\infty(I).$
    Assume that there exists $\gamma \in \mathbb R$ such that the pre-factor $B(\theta_0)$ satisfies
    $$
      B(A\theta_0)= AB(\theta_0)
      \quad\text{and}\quad
      B(\theta_{0,a})= a^{-\gamma} B(\theta_0),
    $$
    where $\theta_{0,a}(x) = \theta_0(x/a)$ for $x \in (0,a)^d$ and $\theta_{0,a}(x) = 0$ otherwise.

    Then, for any mean zero $\theta_0 \in C_c^\infty(I)$ and any smooth velocity field $u$ such that
    $$
      \limsup_{t \to \infty} \frac{1}{t} \int_0^t \norm{\grad u}_{L^p} <  \infty,
      \quad \grad \cdot u = 0,
      \quad\text{and}\quad u = 0 \text{ on } \del I,
    $$
    the decay of $\norm{\theta(t)}_{H^{-1}}$ as $t \to \infty$ is bounded below by an algebraic function of time.
  \end{proposition}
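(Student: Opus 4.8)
The plan is to exploit a dilation symmetry of the transport equation~\eqref{eq:1}: given the putative \emph{universal} lower bound~\eqref{eqnULowerBd}, one inserts a slowed-down, spatially compressed copy of the mixing problem into a small corner sub-cube of $I$, applies~\eqref{eqnULowerBd} to that copy, and then optimises over the size of the sub-cube. Concretely, fix a mean zero $\theta_0\in C_c^\infty(I)$ and a velocity $u$ as in the statement; since $\limsup_{t\to\infty}\frac1t\int_0^t\norm{\grad u}_{L^p}<\infty$ there are constants $C,C_0$ with $\int_0^\tau\norm{\grad u(s)}_{L^p}\,ds\le C\tau+C_0$ for all $\tau\ge0$. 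For $a\in(0,1)$ introduce the sub-cube $J_a=(0,a\ell)^d\subset I$, the rescaled velocity $v^a(x,t):=u(x/a,\,t/a)$ on $J_a$ extended by $0$ to $I$ (since $u$ vanishes on $\del I$, the extension is again incompressible and vanishes on $\del I$, hence is admissible in~\eqref{eqnULowerBd}), and the rescaled datum $\theta_0^a(x):=\theta_0(x/a)$ supported in $J_a$. Because the flow of $v^a$ is exactly the rescaled flow of $u$, the corresponding solution of~\eqref{eq:1} is $\theta^a(x,t)=\theta(x/a,\,t/a)$ on $J_a$ and $0$ otherwise.

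Applying~\eqref{eqnULowerBd} on $I$ to the pair $(v^a,\theta_0^a)$ at a time $T$, one substitutes the scaling relations
\[
  \int_0^T\norm{\grad v^a(s)}_{L^p(I)}\,ds=a^{d/p}\!\int_0^{T/a}\!\norm{\grad u(s)}_{L^p(I)}\,ds,
  \qquad
  \norm{\theta^a(T)}_{H^{-1}}=a^{d/2+1}\,\norm{\theta(T/a)}_{H^{-1}}
\]
(the second being the exact homogeneity of the $H^{-1}$ norm under $x\mapsto x/a$) together with $B(\theta_0^a)=a^{-\gamma}B(\theta_0)$. Writing $\tau=T/a$ and dividing by $a^{d/2+1}$ yields
\[
  \norm{\theta(\tau)}_{H^{-1}}
    \ge B(\theta_0)\,a^{-\mu}\exp\paren[\Big]{-\frac{c_0\,a^{d/p}}{\ell^{d/p}}\int_0^\tau\norm{\grad u(s)}_{L^p}\,ds}
    \ge B(\theta_0)\,a^{-\mu}\exp\paren[\Big]{-\frac{c_0(C\tau+C_0)}{\ell^{d/p}}\,a^{d/p}}
\]
for every $\tau>0$ and every $a\in(0,1)$, where $\mu:=\gamma+\tfrac d2+1$. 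The gain is conceptual: compressing the data into a sub-cube of side $a\ell$ replaces the enstrophy budget expended over physical time $\tau$ by the far smaller $a^{d/p}\int_0^\tau\norm{\grad u}_{L^p}$ (this is where $p\le d$, i.e.\ $d/p\ge1$, is used so that $a^{d/p}\to0$), at the cost of only a polynomial factor $a^{-\mu}$ in the prefactor — and the no-slip condition is precisely what makes the compressed velocity $v^a$ legitimate.

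It remains to optimise in $a$. Sending $a\to0^+$ shows that $\mu>0$ is impossible (the right-hand side would diverge while $\norm{\theta(\tau)}_{H^{-1}}$ is finite), so necessarily $\mu\le0$ — equivalently $\gamma\le-(\tfrac d2+1)$; if $\mu=0$, sending $a\to0^+$ already gives the time-independent bound $\norm{\theta(\tau)}_{H^{-1}}\ge B(\theta_0)$. If $\mu<0$, put $\sigma:=-\mu>0$ and $\kappa(\tau):=\frac{c_0}{\ell^{d/p}}\int_0^\tau\norm{\grad u}_{L^p}\le\frac{c_0(C\tau+C_0)}{\ell^{d/p}}$; maximising $a\mapsto a^{\sigma}e^{-\kappa a^{d/p}}$ over $a\in(0,1)$ gives, for all large $\tau$, the choice $a_\ast=(\sigma p/(\kappa d))^{p/d}\in(0,1)$, whence $\norm{\theta(\tau)}_{H^{-1}}\ge B(\theta_0)(\sigma p/(e\kappa d))^{\sigma p/d}\gtrsim\tau^{-\sigma p/d}$. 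In all cases $\norm{\theta(\tau)}_{H^{-1}}$ is bounded below by a constant times $\tau^{-\alpha}$ with $\alpha=\tfrac pd\,|\gamma+\tfrac d2+1|$, which is the assertion.

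I expect the only real obstacle to be technical bookkeeping around the two scaling identities and the function-space conventions in~\eqref{eqnULowerBd}. The $H^{-1}$-homogeneity must hold as a genuine equality rather than a one-sided estimate — this is cleanest if the mix norm is the homogeneous $H^{-1}(\mathbb{R}^d)$ norm, which is natural here because $\theta(\cdot,t)$ stays compactly supported in $I$ (the flow fixes $\del I$), and it is consistent with the mean-zero hypothesis in the dimensions $d\ge2$ considered. Likewise, the zero-extension of $v^a$ is merely Lipschitz across $\del J_a$ unless $u$ vanishes to infinite order on $\del I$, so one should either read~\eqref{eqnULowerBd} as not demanding smoothness (its hypothesis, as stated, only asks that $u$ be incompressible and vanish on $\del I$) or pass to a mollification. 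Neither point affects the structure of the argument.
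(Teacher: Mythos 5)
Your proposal is correct and follows essentially the same route as the paper: compress the data and velocity into a sub-cube of side $a\ell$, extend by zero using the no-slip condition, apply the assumed universal bound together with the scalings $\norm{\theta(\cdot/a)}_{H^{-1}}=a^{d/2+1}\norm{\theta}_{H^{-1}}$, $B(\theta_{0,a})=a^{-\gamma}B(\theta_0)$, and $\int_0^T\norm{\grad v^a}_{L^p}=a^{d/p}\int_0^{T/a}\norm{\grad u}_{L^p}$, and then optimize over $a$ to obtain the algebraic bound with exponent $\tfrac{p}{d}\abs{\gamma+\tfrac d2+1}$. Your explicit case analysis on the sign of $\mu=\gamma+\tfrac d2+1$ and the use of the $\limsup$ hypothesis are slightly more careful than the paper's brief maximization step, but the argument is the same.
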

  \begin{proof}
    We prove this using an elementary scaling argument.
    Without loss of generality, assume $\ell = 1$.
    Let $\theta_0 \in C_c^\infty$ have zero mean and let $f(t) = \norm{\theta(t)}_{H^{-1}}$.
    Our aim is to show that for $t$ large, $f$ decays algebraically with respect to $t$.
    Let $a > 0$, and define
    $$
      I_a = (0, a)^d,\quad
      \eta( x, t ) = \Chi*{I_a} \theta\paren[\big]{ x/a, t/a },
      \quad
      v(x, t) = \Chi*{I_a} u\paren[\big]{ x/a, t/a }.
    $$
    Then $(\eta, v)$ is a solution of~\eqref{eq:1} on the \emph{unscaled} cube $I$.
    Since $\theta_0$ is compactly supported in $I$ and $u = 0$ on $\del I$, we see $\theta(t)$ remains compactly supported in $I_a$ for all $t >0$.
    By our assumption,
    $$
     B(\eta_0) = a^{-\gamma}B(\theta_0)
      \quad\text{and}\quad
      \norm{\grad v(t)}_{L^p} = a^{\frac{d}{p} - 1} \norm{\grad u\paren[\big]{t/a } }_{L^p}.
    $$
    Thus the assumed lower bound~\eqref{eqnULowerBd} gives
    \begin{multline*}
      a^{d/2+1} f\paren[\big]{t/a}
      = \norm{\eta(t)}_{H^{-1}}
	\geq 
       B(\eta_0)
	  \exp\paren[\Big]{ -c_0 \int_0^t \norm{\grad v}_{L^p} }\\
	= 
       a^{-\gamma}B(\theta_0)
	  \exp\paren[\Big]{ -c_0 a^{\frac{d}{p} - 1} \int_0^t \norm{\grad u\paren[\big]{ s/a }}_{L^p} \, ds }.
    \end{multline*}
    The first equality above follows by duality and scaling.

    Hence, taking $t'=t/a$ gives
    \[ f(t') \geq  a^{-N}B(\theta_0) \exp \left(-c_0 a^{d/p}  \int_0^{t'} \norm{\grad u(s)}_{L^p} \, ds \right), \]
    where $N= d/2+1+\gamma.$
This bound has to be true for every $a>0.$ 
Maximizing the right hand side in $a$ (and changing $t'$ to $t$), we arrive at
an algebraic lower bound
\begin{gather*}
  f(t) \geq C \left( \frac{N}{\int_0^t \|\nabla u\|_{L^p}\,ds} \right)^{-pN/d}.
  \qedhere
\end{gather*}
  \end{proof}

  \section{Acknowledgements.}
  The authors would like to thank Charlie Doering for introducing us to this problem and many helpful discussions.

  \bibliographystyle{abbrv}
  \bibliography{mixing}
\end{document}